\subjclass[2010]{Primary: 55R12; Secondary: 55M05, 55P25}
\definecolor{darkgreen}{rgb}{0,0.30,0} 
\definecolor{darkred}{rgb}{0.75,0,0}
\definecolor{darkblue}{rgb}{0,0,0.6} 
\renewcommand*{\backref}[1]{}
\renewcommand*{\backrefalt}[4]{({%
    \ifcase #1 Not cited.%
          \or On p.~#2%
          \else On pp.~#2%
    \fi%
    })}
\def\makeautorefname#1#2{\expandafter\def\csname#1autorefname\endcsname{#2}}
\newtheorem{thm}{Theorem}[section]
\newtheorem{cor}{Corollary}[section]
\newtheorem{lem}{Lemma}[section]
\newtheorem{prop}{Proposition}[section]
\theoremstyle{definition}
\newtheorem{df}{Definition}[section]
\newtheorem{rmk}{Remark}[section]
\numberwithin{equation}{section}
\numberwithin{figure}{section}
\newtheorem{bigthm}{Theorem}
\theoremstyle{definition}
\let\c@cor=\c@thm
\let\c@prop=\c@thm
\let\c@lem=\c@thm
\let\c@df=\c@thm
\let\c@ex=\c@thm
\let\c@warn=\c@thm
\let\c@rmk=\c@thm
\let\c@equation\c@thm
\let\c@figure\c@thm
\let\c@table\c@thm
\newtheorem*{convent}{Conventions}
\newtheorem*{acks}{Acknowledgements}
\DeclareMathOperator{\id}{id}
\DeclareMathOperator{\maps}{map}
\DeclareSymbolFont{bbold}{U}{bbold}{m}{n}
\DeclareSymbolFontAlphabet{\mathbbold}{bbold}
\begin{document}
\title{On the multiplicativity of the Euler characteristic}
\date{\today} 
\author{John R. Klein} 
\address{Department of Mathematics, Wayne State University,
Detroit, MI 48202} 
\email{klein@math.wayne.edu} 
\author{Cary Malkiewich} 
\address{Department of Mathematics,
Binghamton University,
Binghamton, NY 13902}
\email{malkiewich@math.binghamton.edu}
\author{Maxime Ramzi} 
\address{Department of Mathematical Sciences,
University of Copenhagen,
Universitetsparken 5,
DK-2100 Copenhagen Ø}
\email{ramzi@math.ku.dk}

\begin{abstract}  
	We give two proofs that the Euler characteristic is multiplicative, for fiber sequences of finitely dominated spaces. This is equivalent to proving that the Becker-Gottlieb transfer is functorial on $\pi_0$.
\end{abstract}

\maketitle
\setlength{\parindent}{15pt}
\setlength{\parskip}{1pt plus 0pt minus 1pt}
\def\Top{\bold T\bold o \bold p}
\def\wTop{\text{\rm w}\bold T}
\def\wT{\text{\rm w}\bold T}
\def\vo{\varOmega}
\def\vs{\varSigma}
\def\smsh{\wedge}
\def\esmsh{\,\hat\wedge\,}
\def\flush{\flushpar}
\def\dbslash{\smsh!\! /}
\def\:{\colon\!}
\def\Bbb{\mathbb}
\def\bold{\mathbf}
\def\cal{\mathcal}
\def\orb{\cal O}
\def\hoP{\text{\rm ho}P}
\def\Sph{\mathbb{S}}
\def\Z{\mathbb{Z}}
\def\Q{\mathbb{Q}}
\def\F{\mathbb{F}}
\def\sma{\wedge}
\def\ti{\tilde}
\newcommand{\HH}{\mathrm{HH}}
\newcommand{\THH}{\mathrm{THH}}

\setcounter{tocdepth}{1}
\tableofcontents
\addcontentsline{file}{sec_unit}{entry}

\section{Introduction}

The Euler characteristic $\chi(X) \in \Z$ is among the oldest and simplest invariants of topological spaces. It is defined when $X$ is homotopy equivalent to a finite cell complex, as the alternating sum of the numbers of cells of $X$, or equivalently as the alternating sum of the ranks of the homology groups $H_i(X)$.

It is well-known that the Euler characteristic is multiplicative in the sense that $\chi(X \times Y) = \chi(X) \chi(Y)$
for finite cell complexes $X$ and $Y$. More generally, given a fibration sequence
\[ F \to E \to B \]
in which $F$ and $B$ are finite cell complexes, $E$ is also equivalent to a finite cell complex and we have that $\chi(E) = \chi(F)\chi(B)$.

In this paper, we prove that this multiplicativity holds in the more general case that $F$ and $B$ are finitely dominated. Recall that a \emph{finitely dominated} space is one that is a retract up to homotopy of a finite cell complex. If $B$ is finitely dominated, its integral Euler characteristic $\chi(B) \in \Z$ can be defined in several equivalent ways, for instance:
\begin{itemize}
	\item the alternating sum of the ranks of the homology groups, with coefficients in any field $H_i(B;k)$,
	\item the Euler characteristic of any finite complex homotopy equivalent to $\Sigma^2 B$,
	\item the class in $K_0(\Z) \cong \Z$ defined by the perfect chain complex $C_*(B)$, etc.
\end{itemize}

\begin{bigthm}\label{intro_multiplicative}
For every fiber sequence of spaces
$$
F\to E \to B
$$
where $F$ and $B$ (and therefore $E$) are finitely dominated,
\[ \chi(E) = \chi(F)\chi(B). \]
\end{bigthm}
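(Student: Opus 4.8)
The plan is to deduce multiplicativity from the functoriality of the Becker--Gottlieb transfer, organized through the trace formalism for dualizable objects in parametrized spectra. \emph{First} I would reduce to the case that $B$ is connected: both $\chi(E)$ and $\chi(B)$ are additive over path components, and over each component the fiber is the given $F$, so the connected case implies the general one (and makes the single symbol $F$ unambiguous). Viewing $B$ as an $\infty$-groupoid, the fibration $p\colon E\to B$ is classified by the local system of fibers $b\mapsto \Sigma^\infty_+ F_b$, an object $M\in \Sp^B:=\operatorname{Fun}(B,\Sp)$. The symmetric monoidal structure on $\Sp^B$ is pointwise, so duals are computed pointwise; as $F$ is finitely dominated, each $\Sigma^\infty_+ F_b$ is dualizable in $\Sp$, whence $M$ is dualizable in $\Sp^B$.

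\emph{Next} I would record the three properties of the Becker--Gottlieb transfer $\tau_q\colon \Sigma^\infty_+ Y\to \Sigma^\infty_+ X$ attached to a fibration $q\colon X\to Y$ with finitely dominated fibers, obtained as the wrong-way map coming from fiberwise dualizability: (a) \emph{normalization}: for $X\to\ast$ the composite $\Sph\xrightarrow{\tau}\Sigma^\infty_+ X\xrightarrow{\epsilon_X}\Sph$ is $\chi(X)$; (b) \emph{projection}: for connected base with fiber $G$, one has $q_*\circ\tau_q=\chi(G)\cdot\mathrm{id}$, which is (a) performed fiberwise; and (c) \emph{functoriality}: $\tau_{r\circ q}=\tau_q\circ\tau_r$ for composable such fibrations, at least after applying $\pi_0$. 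Granting these and applying them to the tower $F\to E\xrightarrow{p}B\xrightarrow{\pi}\ast$, together with the factorization $\epsilon_E=\epsilon_B\circ p_*$, gives
\[ \chi(E)=\epsilon_E\circ\tau_{\pi p}=\epsilon_B\circ p_*\circ\tau_p\circ\tau_\pi=\chi(F)\cdot(\epsilon_B\circ\tau_\pi)=\chi(F)\chi(B), \]
where the third equality uses (b) and the last uses (a) for $B$.

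Properties (a) and (b) are the standard statement that the fiberwise Euler characteristic computes $\chi$ of the fiber; in trace language, $q_*\circ\tau_q$ is the trace of the identity on the fiberwise suspension spectrum in $\Sp^Y$, a self-map of the unit whose value at each point of $Y$ is $\chi(G)\in\pi_0\Sph=\Z$. \emph{The hard part will be} property (c): the functoriality of the transfer on $\pi_0$. This is exactly the content flagged in the abstract, and it is genuinely subtle---historically the transfer was known to be functorial only up to homotopy and under restrictive manifold hypotheses, because the identity ``the trace of a composite is the composite of traces'' in the base-change bicategory of parametrized spectra requires a delicate coherence that need not hold at the level of spectra.

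\emph{To attack (c)} I would realize $\tau_p$, $\tau_\pi$ and $\tau_{\pi p}$ as bicategorical traces in the base-change bicategory of parametrized spectra and prove the composition law for these traces, crucially only on $\pi_0$: working in $\pi_0$ one may detect the relevant stable maps by an Euler-characteristic index rather than demanding a coherent equivalence of spectra, which is what lets one bypass the coherence obstruction. A second, more elementary route would first prove the theorem for $B$ a finite complex by induction on cells---attaching an $n$-cell builds $E$ by gluing a trivial piece $F\times D^n$ along the restriction of $E$ over the attaching sphere, so additivity of $\chi$ together with the inductive value $\chi(F)\chi(S^{n-1})$ over that sphere changes $\chi(E)$ by $\chi(F)\cdot(-1)^n$, matching the change $(-1)^n$ in $\chi(B)$---and would then reduce the finitely dominated case to the finite one by pulling the fibration back along a finite domination $B\to B'\to B$ and comparing the resulting retract splittings of $\Sigma^\infty_+ E$ and $\Sigma^\infty_+ B$, which carry compatible fiberwise Euler characteristics.
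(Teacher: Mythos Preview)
Your reduction of the theorem to property (c)---functoriality of the Becker--Gottlieb transfer on $\pi_0$---is correct and is exactly the equivalence the paper records as \autoref{equivalence}. The gap is that neither of your two attacks on (c) supplies the missing idea.

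Your first attack treats the failure of functoriality as a ``coherence obstruction'' that disappears on $\pi_0$. It does not. The issue is not that one must organize higher homotopies; it is that the two honest integers $(q\circ p)^!(1)$ and $p^!\circ q^!(1)$ might simply be different. Saying you will ``detect the relevant stable maps by an Euler-characteristic index'' is circular: that index \emph{is} the quantity whose multiplicativity is in question. Your second attack---pull back along a finite domination $B\xrightarrow{s}B'\xrightarrow{r}B$ and compare retract splittings---also stalls. You do get $E$ as a retract of $E':=r^*E$ and you know $\chi(E')=\chi(F)\chi(B')$ by the finite case, but a retract does not determine $\chi$ of the summand without knowing the complement; unwinding, you are left needing that the Lefschetz number of the idempotent self-map $\widetilde{sr}\colon E'\to E'$ (which covers $sr\colon B'\to B'$ and is the identity on fibers) equals $\chi(F)\cdot L(sr)$. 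That is no easier than the original problem.

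The paper's two proofs both inject genuine $K$-theoretic input at exactly this point. The first reduces (via an orientability trick and the Serre spectral sequence) to $n$-sheeted covers, and then uses Swan's theorem that $\tilde K_0(\Z[\pi])$ is torsion for finite $\pi$ to show that twisting a perfect $\Z[\pi]$-complex by a finite $\pi$-set does not change its rank. The second proves (c) directly: it expresses $(q\circ p)^!$ and $p^!\circ q^!$ via the free-loop transfer, so that on $\pi_0$ their difference is a sum $\sum_i a_i(L(\gamma_i)-\chi(X_y))$ over conjugacy classes $\gamma_i$ in the image of the Hattori--Stallings trace $K_0(\Z[\pi_1 Y])\to \HH_0(\Z[\pi_1 Y])$; Linnell's theorem forces each such $\gamma_i$ to act trivially on $H_*(X_y;\F_p)$ for every prime $p$, whence $L(\gamma_i)=\chi(X_y)$. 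Some such input about $K_0$ of group rings is, as the paper remarks, unavoidable---the cell-induction genuinely breaks in the finitely dominated case, and your proposal does not replace it with anything.
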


\begin{rmk}
	Although the statement of \autoref{intro_multiplicative} looks elementary, its proof is not, and as far as we can tell it does not appear previously in the literature. While in the case of a finite cell complex $B$, one can induct over cells, in the finitely dominated case, this induction procedure breaks. Both of the proofs in this paper rely on nontrivial theorems concerning $K_0$ of group rings.
\end{rmk}

Our interest in this comes from the question of functoriality for the Becker-Gottlieb transfer. Suppose that $p\colon X \to Y$ is a fibration with finitely dominated fibers. The Becker-Gottlieb transfer $p^!$ is a stable map
\[ p^!\colon \Sigma^\infty_+ Y \to \Sigma^\infty_+ X, \]
defined as the trace of the fiberwise diagonal $X \to X \times_Y X$ in parametrized spectra over $X$, pushed forward from $X$ to $*$.

We say the Becker-Gottlieb transfer is functorial if for fibrations $p\colon X \to Y$ and $q\colon Y \to Z$ with finitely dominated fibers, the composite of transfer maps $p^! \circ q^!$ and the transfer of the composite $(q \circ p)^!$ agree in the stable homotopy category. This is known to hold in some cases, for instance for smooth bundles with compact manifold fiber, but is not known in general.
The first two authors offered a general proof in \cite{km}, but a mistake in that proof was recently found by Bastiaan Cnossen and Shachar Carmeli \cite{km_corr}. As a result, the question is open.

To search for possible counterexamples, we work one homotopy group at a time. We say the Becker-Gottlieb transfer is functorial on $\pi_0$ if for any such $p$ and $q$, the maps $p^! \circ q^!$ and $(q \circ p)^!$ agree after applying $\pi_0$. In other words, the diagram
\[ \xymatrix{
	\pi_0(\Sigma^\infty_+ X) & \pi_0(\Sigma^\infty_+ Y) \ar[l]_-{p^!} & \pi_0(\Sigma^\infty_+ Z) \ar[l]_-{q^!} \ar@/_2em/[ll]_-{(q \circ p)^!}
} \]
commutes.
\begin{bigthm}\label{intro_functorial}
The Becker-Gottlieb transfer is functorial on $\pi_0$.
\end{bigthm}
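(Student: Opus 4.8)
The plan is to reduce \autoref{intro_functorial} to \autoref{intro_multiplicative} by computing the Becker--Gottlieb transfer on $\pi_0$ explicitly. Recall that for any space $W$ one has a natural identification $\pi_0(\Sigma^\infty_+ W) \cong H_0(W;\Z) \cong \Z^{\pi_0(W)}$, so a transfer map $p^!$ induces a homomorphism of free abelian groups whose matrix entries I will express in terms of Euler characteristics of fiber components. Once that formula is in hand, the identity $(q\circ p)^! = p^!\circ q^!$ on $\pi_0$ becomes, entry by entry, exactly the multiplicativity of $\chi$ for a suitable fiber sequence.

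First I would establish the following formula. Let $p\colon X \to Y$ be a fibration with finitely dominated fibers, let $\beta \in \pi_0(Y)$ index a component $Y_\beta$, and let $\alpha \in \pi_0(X)$ index a component $X_\alpha$ with $p(X_\alpha)\subseteq Y_\beta$. Writing $F_\alpha := p^{-1}(y)\cap X_\alpha$ for the part of a fiber $p^{-1}(y)$, $y\in Y_\beta$, lying in $X_\alpha$, I claim
\[ p^!([\beta]) \;=\; \sum_{\alpha\,:\,p(\alpha)=\beta} \chi(F_\alpha)\,[\alpha] \in H_0(X;\Z). \]
To prove this I would first treat a fibration over a point, $r\colon F \to *$: the defining Becker--Gottlieb property that $r_*\circ r^!$ is multiplication by $\chi(F)$, together with the additivity of the transfer along the decomposition $F = \bigsqcup_{c\in\pi_0(F)} F_c$ (the fiberwise diagonal and its trace respect this coproduct), gives $r^!(1) = \sum_c \chi(F_c)[c]$, since for connected $F_c$ the single coefficient is pinned down by $r_{c*}r_c^!=\chi(F_c)$. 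The general case then follows from naturality of the transfer under the pullback square obtained by restricting $p$ along a point inclusion $\{y\}\hookrightarrow Y_\beta$: base change identifies $p^!([\beta])$ with the pushforward to $H_0(X)$ of $(p^{-1}(y)\to *)^!(1)$, and the previous computation finishes it.

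With the formula established, I would prove the theorem by a direct matrix computation. Given $p\colon X\to Y$ and $q\colon Y\to Z$, fix components $\gamma\in\pi_0(Z)$ and $\alpha\in\pi_0(X)$ with $(q\circ p)(\alpha)=\gamma$, and set $\beta = p(\alpha)$. Let $G = q^{-1}(z)$ for $z\in Z_\gamma$, let $G_\beta = G\cap Y_\beta$, and let $H_\alpha = (q\circ p)^{-1}(z)\cap X_\alpha$. Expanding $p^!\circ q^!$ with the formula, the coefficient of $[\alpha]$ in $(p^!\circ q^!)([\gamma])$ is $\chi(G_\beta)\,\chi(F_\alpha)$, while the coefficient of $[\alpha]$ in $(q\circ p)^!([\gamma])$ is $\chi(H_\alpha)$. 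Now the restriction of $p$ exhibits
\[ F_\alpha \to H_\alpha \to G_\beta \]
as a fiber sequence, and $F_\alpha$, $G_\beta$ are finitely dominated (each is a union of components of a finitely dominated space). Hence $\chi(H_\alpha)=\chi(G_\beta)\,\chi(F_\alpha)$ by \autoref{intro_multiplicative} (applied componentwise over $G_\beta$ and summed, using additivity of $\chi$), so the two matrices agree.

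The genuine obstacle is \autoref{intro_multiplicative} itself; granting it, the argument above is formal. The points requiring care are the standard-but-nontrivial inputs to the $\pi_0$ formula: the naturality of the Becker--Gottlieb transfer under pullback, the identity that $r_*r^!$ is multiplication by $\chi$ over a point, and the additivity of the transfer over the components of the fiber. One must also check that the auxiliary sequences $F_\alpha\to H_\alpha\to G_\beta$ are honest fiber sequences of finitely dominated spaces, which holds because a finitely dominated space has finitely many components, each finitely dominated, and the restriction of a fibration to a union of components of its total space is again a fibration onto the corresponding base.
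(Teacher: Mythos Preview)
Your proposal is correct and coincides with the paper's approach: the paper likewise deduces \autoref{intro_functorial} from \autoref{intro_multiplicative} via the explicit description of the transfer on $\pi_0$ (see \autoref{equivalence}, together with \autoref{transfer_additive} and \autoref{transfer_on_pi0}). The only organizational difference is that the paper first uses additivity over components to reduce to the case where $X$, $Y$, $Z$ are all connected, so the transfer on $\pi_0$ becomes a single integer multiplication by $\chi$ of the fiber; your version keeps the components and tracks the resulting matrix entries, which amounts to the same computation unwound.
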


As a result, any obstruction to functoriality will have to appear above the level of $\pi_0$.

Theorems \ref{intro_multiplicative} and \ref{intro_functorial} are closely related. There is a short argument that each one implies the other, see \autoref{equivalence}. Therefore, in giving two unrelated proofs of Theorems \ref{intro_multiplicative} and \ref{intro_functorial}, we are in fact giving two proofs for each. Our first proof uses a result of Swan on $K_0$ of group rings.
Our second proof uses current work of the third author and collaborators \cite{CCRY}, and a result of Linnell on the Hattori-Stallings trace for group rings.

\begin{convent}
All maps of spaces and spectra in this paper are considered as morphisms in the homotopy category. All functors on spaces and spectra, such as the smash product of two spectra, are derived so that the functor is defined on the homotopy category.
\end{convent} 

\begin{acks}
This paper came about because of an error in \cite{km} that was discovered by Shachar Carmeli and Bastiaan Cnossen. The authors would like to thank them for finding this error and bringing it to their attention, and to thank Bastiaan Cnossen, Shachar Carmeli, and Lior Yanovski for their support in the writing of the current paper. The authors would also like to thank the anonymous referee for helpful comments that improved the exposition at several key points.

Preliminary sketches of both proofs of the main result
 first appeared on MathOverflow in early March, 2022 (see \cite{MO-multip-Euler}).
The authors are grateful to Oscar Randal-Williams for an insightful suggestion appearing there that completed the first version of the first proof.

The material in \S3 was supported  by the U.S.~Department of Energy, Office of Science, 
under Award Number DE-SC-SC0022134. The second author was supported the NSF grants DMS-2005524 and DMS-2052923. The third author was supported by the Danish National Research Foundation through the Copenhagen Centre for Geometry and Topology (DNRF151). 
\end{acks}

\section{Preliminaries}

We begin by recalling some properties of finitely dominated spaces and of the Becker-Gottlieb transfer. We use these to show that Theorems \ref{intro_multiplicative} and \ref{intro_functorial} are equivalent.

As in the introduction, a \emph{finitely dominated} space is one that is a retract up to homotopy of a finite cell complex. The following well known result (see e.g. \cite[Thm 1]{lal}) is needed for the statement of \autoref{intro_multiplicative} to make sense.
\begin{thm}\label{extension_of_fd}
	If $F \to E \to B$ is a fiber sequence in which $F$ and $B$ are finitely dominated then so is $E$.
\end{thm}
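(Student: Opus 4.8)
The plan is to prove this by passing to the homotopy-theoretic reformulation of finite domination: a space $X$ is finitely dominated if and only if it is a \emph{compact object} of the $\infty$-category $\mathcal{S}$ of spaces, i.e.\ a retract of a finite cell complex. Granting this, \autoref{extension_of_fd} becomes a closure property of compact objects under formation of total spaces of fibrations with compact base and compact fiber. First I would reduce to the case that $B$ is connected: since $B$ is finitely dominated, $\pi_0(B)$ is finite, the total space $E$ splits as the finite coproduct of the restrictions of $p$ over the components of $B$, and compact objects are closed under finite coproducts. Over a connected base the fibration has a well-defined fiber $F$, which is finitely dominated by hypothesis.

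Next I would straighten the fibration $p\colon E\to B$ to a diagram $\phi\colon B\to\mathcal{S}$ (regarding $B$ as an $\infty$-groupoid) with $\phi(b)\simeq F$ for every $b\in B$, and use the identification $E\simeq\colim_{B}\phi$ of the total space with the colimit of the fiber diagram. The proof then has two steps. Step one: $\phi$ is a compact object of $\operatorname{Fun}(B,\mathcal{S})\simeq\mathcal{S}_{/B}$. Indeed, for a filtered system $\{\Psi_j\}$ one computes
\[ \operatorname{Map}(\phi,\,\colim_j\Psi_j)\;\simeq\;\lim_{b\in B}\operatorname{Map}_{\mathcal S}\bigl(F,\,\colim_j\Psi_j(b)\bigr)\;\simeq\;\lim_{b\in B}\,\colim_j\operatorname{Map}_{\mathcal S}(F,\Psi_j(b)), \]
using that $F$ is compact, and this agrees with $\colim_j\operatorname{Map}(\phi,\Psi_j)$ because the limit over $B$ commutes with filtered colimits. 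Step two: the functor $\colim_B\colon\operatorname{Fun}(B,\mathcal S)\to\mathcal S$ preserves compact objects, since its right adjoint is the constant-diagram functor, which preserves all colimits (colimits in functor categories being computed pointwise). Combining the two steps, $E\simeq\colim_B\phi$ is compact, hence finitely dominated.

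The main obstacle is the interchange of $\lim_B$ with filtered colimits used in Step one, and this is exactly where the finite domination of $B$ enters. The point is that $B$, being a retract of a finite complex, makes $\lim_B$ a retract of a \emph{finite} homotopy limit, and in the $\infty$-category of spaces filtered colimits are left exact, i.e.\ they commute with finite limits. Concretely, writing $B$ as a retract of a finite complex $K$ with retraction $r\colon K\to B$ exhibits $\lim_B\Theta$ as a retract of $\lim_K(\Theta\circ r)$, and $\lim_K$ is a finite limit. It is worth noting the classical alternative via Wall's finiteness obstruction and the chain complex $C_*(\widetilde E)$ over $\mathbb{Z}[\pi_1 E]$; there the analogous difficulty is promoted to showing that $\pi_1(E)$ is finitely presented, which requires a genuine group-theoretic input about extensions, whereas the compact-object argument above dispatches $\pi_1$ and the higher homotopy uniformly.
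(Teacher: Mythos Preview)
The paper does not prove this theorem at all; it simply records it as ``well known'' and cites the literature (\cite[Thm~1]{lal}). So there is no in-paper argument to compare your proposal against.

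Your $\infty$-categorical argument is correct and self-contained. Identifying finitely dominated spaces with compact objects of $\mathcal S$, straightening to $\phi\colon B\to\mathcal S$, and then running the two-step argument (compactness of $\phi$ in $\operatorname{Fun}(B,\mathcal S)$, followed by preservation of compacts by $\colim_B$) is a clean way to do this, and your diagnosis that the finite domination of $B$ enters precisely through the $\lim_B$/filtered-colimit interchange is exactly right. One terminological wrinkle: the sentence ``$\lim_K$ is a finite limit'' is not literally true, since a finite CW complex $K$ viewed as an $\infty$-groupoid need not be a finite $\infty$-category (e.g.\ $K=S^2$ has infinitely many nondegenerate simplices in any Kan model). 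What you actually need, and what holds, is that $\lim_K$ can be written as an \emph{iterated} finite limit via descent along cell attachments: if $K = K'\cup_{S^{n-1}} D^n$ then
\[
\lim_K F \;\simeq\; \lim_{K'} F|_{K'} \;\times_{\lim_{S^{n-1}} F|_{S^{n-1}}}\; F(\ast),
\]
and inducting on the cells of $K$ (and of the attaching spheres) expresses $\lim_K$ entirely in terms of finite products and pullbacks, with which filtered colimits in $\mathcal S$ do commute. With that adjustment the argument goes through as written. Compared with the classical route via Wall's obstruction that you allude to, your approach has the advantage of handling $\pi_1$ and higher homotopy uniformly and avoiding any separate finiteness argument for $\pi_1(E)$.
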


Suppose $p\colon X \to Y$ is a fibration whose fibers $X_y$ are finitely dominated. Then the fiberwise suspension spectrum $\Sigma^\infty_{+Y} X$ over $Y$ is dualizable in the symmetric monoidal category of parametrized spectra over $Y$, defined in e.g. \cite{crabb_james,ms,malkiewich_parametrized}
. The \emph{pretransfer} is defined to be the trace of its fiberwise diagonal map $\delta\colon X \to X \times_Y X$. Concretely, in the fiber over each point $y \in Y$, this is the composite of coevaluation, diagonal, and evaluation maps:
\begin{equation}\label{pretransfer}
	\mathbb S
	\to \Sigma^\infty_+ X_y \sma (\Sigma^\infty_+ X_y)^\vee
	\to \Sigma^\infty_+ X_y \sma \Sigma^\infty_+X_y \sma (\Sigma^\infty_+X_y)^\vee
	\to \Sigma^\infty_+ X_y.
\end{equation}
This forms a parametrized stable map from $Y$ to $X$. Pushing forward along the collapse map $Y \to *$ gives a stable map
\[ p^!\colon \Sigma^\infty_+ Y \to \Sigma^\infty_+ X \]
called the \emph{Becker-Gottlieb transfer}. See e.g. \cite{becker1976transfer} \cite[15.2.1]{ms}, \cite[7.1.5]{malkiewich_parametrized}, \cite[\S 4]{km}, \cite{LM}, or \cite[\S 6]{CCRY} for more details.

The additivity of traces from e.g. (\cite{may_additivity,ponto_shulman_additivity,CCRY}) implies that the Becker-Gottlieb transfer is additive over components in the following sense. Let $Y = \coprod_\alpha Y_\alpha$ and $X = \coprod_{\alpha,\beta} X_{\alpha\beta}$ be any decomposition so that each $Y_\alpha$ has as its preimage those terms of the form $X_{\alpha\beta}$. Note that for each $\alpha$, there are only finitely many values of $\beta$. Let $p_{\alpha\beta}\colon X_{\alpha\beta} \to Y_\alpha$ be the restriction of $p$ to $X_{\alpha\beta}$. The transfer $p^!$ is then a map of corproducts
\[ \bigvee_\alpha \Sigma^\infty_+ Y_\alpha \to \bigvee_{\alpha,\beta} \Sigma^\infty_+ X_{\alpha\beta}, \]
so it is determined by its restriction to $\Sigma^\infty_+ Y_\alpha$ for each $\alpha$.
\begin{prop}\label{transfer_additive}
	 The restriction of $p^!$ to $\Sigma^\infty_+ Y_\alpha$ equal to the sum over the terms $X_{\alpha\beta}$ of the the transfers $p_{\alpha\beta}^!$.
\end{prop}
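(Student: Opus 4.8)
The plan is to deduce this directly from the additivity of traces cited above. First I would reduce to working over a single $Y_\alpha$. Because $\Sigma^\infty_+ Y = \bigvee_\alpha \Sigma^\infty_+ Y_\alpha$, the map $p^!$ restricts to each wedge summand, and since the pretransfer is defined fiberwise and is natural under pullback of the base along the clopen inclusion $Y_\alpha \hookrightarrow Y$, this restriction is exactly the transfer $(p|_{Y_\alpha})^!$ of the restricted fibration $p^{-1}(Y_\alpha) = \coprod_\beta X_{\alpha\beta} \to Y_\alpha$. Pushing forward along $Y_\alpha \to *$ commutes with finite wedges, so it is enough to establish the additivity at the level of the pretransfer over $Y_\alpha$, before pushing forward to a point.

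Next I would use the decomposition in the fibers. Over $Y_\alpha$ the dualizable object $\Sigma^\infty_{+Y_\alpha}(p^{-1}Y_\alpha)$ is the finite wedge $\bigvee_\beta M_\beta$ of the dualizable objects $M_\beta := \Sigma^\infty_{+Y_\alpha} X_{\alpha\beta}$; finiteness of the index set (guaranteed by finite domination of the fibers) makes this wedge a genuine finite biproduct, so its dual is $\bigvee_\beta M_\beta^\vee$. The geometric input is that the fiberwise diagonal $\delta$ respects this decomposition: a point of $X_{\alpha\beta}$ is sent into the diagonal block $X_{\alpha\beta} \times_{Y_\alpha} X_{\alpha\beta}$ of $\coprod_{\beta,\beta'} X_{\alpha\beta} \times_{Y_\alpha} X_{\alpha\beta'}$, so on spectra $\delta$ carries $M_\beta$ into $M_\beta \wedge M_\beta$ with no off-diagonal contributions.

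With this in place, the pretransfer is the trace (with coefficients) of $\delta$, and the additivity of traces \cite{may_additivity,ponto_shulman_additivity,CCRY} applies: the trace of a map preserving a finite direct sum decomposition and landing only in the diagonal blocks is the sum of the traces of its diagonal components. Each diagonal component is precisely the pretransfer of $p_{\alpha\beta}$, so the pretransfer of $p|_{Y_\alpha}$ is the sum over $\beta$ of the pretransfers of $p_{\alpha\beta}$; pushing forward along $Y_\alpha \to *$, which is additive and compatible with the wedge inclusions $i_\beta$, then yields the equality of $p^!|_{\Sigma^\infty_+ Y_\alpha}$ with $\sum_\beta (i_\beta)_\ast\, p_{\alpha\beta}^!$. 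I expect the only real work to be casting the cited additivity theorems in the form needed here — for a trace with coefficients $M \to M \wedge M$ in the parametrized category, rather than for a plain endomorphism — and verifying that the $\beta$-th diagonal component of $\delta$ is identified with the pretransfer of $p_{\alpha\beta}$; once that bookkeeping is in place, the conclusion is formal.
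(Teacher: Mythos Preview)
Your proposal is correct and follows exactly the approach the paper indicates: the paper does not spell out a proof but simply states that the additivity of traces from \cite{may_additivity,ponto_shulman_additivity,CCRY} implies this proposition, and your argument is a careful unpacking of that implication. In fact you supply more detail than the paper does, and the only caveat you flag---casting additivity in the generality of traces with coefficients---is precisely the sort of thing those references (especially \cite{ponto_shulman_additivity} and \cite{CCRY}) handle.
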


The transfer can be computed on $\pi_0$ by restricting to a single fiber, where it becomes the composite \eqref{pretransfer} followed by the inclusion $\Sigma^\infty_+ X_y \to \Sigma^\infty_+ X$. Taking rational homology turns the smash products in \eqref{pretransfer} into tensor products. If $X$ and $Y$ are connected, this simplifies further to the trace of the identity for $H_*(X_y;\Q)$, in other words the Euler characteristic of the fiber. This proves another standard fact:

\begin{prop}\label{transfer_on_pi0}
	For $X$ and $Y$ connected, the transfer $p^!$ on $\pi_0$ is the map $\Z \to \Z$ that multiplies by the Euler characteristic $\chi(X_y)$.
\end{prop}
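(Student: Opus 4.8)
The plan is to evaluate $p^!$ on the canonical generator of $\pi_0(\Sigma^\infty_+ Y)$ and to identify the resulting integer with $\chi(X_y)$. First I would record that for any connected space $Z$ the spectrum $\Sigma^\infty_+ Z$ is connective with $\pi_0(\Sigma^\infty_+ Z)\cong H_0(Z;\Z)\cong\Z$, generated by the class of a point; thus $p^!$ on $\pi_0$ is multiplication by a single integer $n$, and the generator of $\pi_0(\Sigma^\infty_+ Y)$ is the image of the inclusion $\mathbb S=\Sigma^\infty_+\{y\}\to\Sigma^\infty_+ Y$ of a point $y$. As recalled just before the statement, restricting $p^!$ to this point reduces it to the pretransfer \eqref{pretransfer} of the single fiber $X_y$ followed by the inclusion $\iota\colon\Sigma^\infty_+ X_y\to\Sigma^\infty_+ X$. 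Hence $n=\iota_*(\tau)$, where $\tau\in\pi_0(\Sigma^\infty_+ X_y)$ denotes the class of the pretransfer $\mathbb S\to\Sigma^\infty_+ X_y$.

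The key reduction is to replace $\iota$ by the collapse map. Because $X_y$ is finitely dominated it has finitely many components, so $\pi_0(\Sigma^\infty_+ X_y)\cong\Z^{\pi_0(X_y)}$, and since $X$ is connected both the inclusion $\iota$ and the collapse $c\colon\Sigma^\infty_+ X_y\to\mathbb S$ induce on $\pi_0$ the same augmentation $\Z^{\pi_0(X_y)}\to\Z$ summing the coordinates. Therefore $n=c_*(\tau)$, i.e. $n$ is the class of the composite $c\circ\tau\colon\mathbb S\to\mathbb S$. I would then observe that $c\circ\tau$ is exactly the categorical trace of the identity of $\Sigma^\infty_+ X_y$: in \eqref{pretransfer} the surviving factor is one of the two copies of $X_y$ produced by the fiberwise diagonal $\delta$, and collapsing that factor amounts to precomposing the diagonal with the collapse of one coordinate. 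Since $X_y\xrightarrow{\Delta}X_y\times X_y\xrightarrow{c\times\mathrm{id}}X_y$ is the identity, feeding this into \eqref{pretransfer} collapses the diagonal and leaves $\mathrm{ev}\circ\mathrm{coev}=\mathrm{tr}(\mathrm{id}_{\Sigma^\infty_+ X_y})$.

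It remains to identify $\mathrm{tr}(\mathrm{id}_{\Sigma^\infty_+ X_y})\in\pi_0(\mathbb S)\cong\Z$ with $\chi(X_y)$. For this I would rationalize: the functor $H_*(-;\Q)$ is symmetric monoidal (Künneth over a field), so it preserves dualizable objects together with their evaluation and coevaluation maps, and hence preserves traces. It therefore sends $\mathrm{tr}(\mathrm{id}_{\Sigma^\infty_+ X_y})$ to the trace of the identity of the finite-dimensional graded vector space $H_*(X_y;\Q)$, which is its graded dimension $\sum_i(-1)^i\dim_\Q H_i(X_y;\Q)=\chi(X_y)$. Since $\pi_0(\mathbb S)=\Z\hookrightarrow\Q=H_0(\mathbb S;\Q)$ is injective, this forces $n=\chi(X_y)$, completing the argument.

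I expect the real content, rather than any serious obstacle, to reside in the two categorical manipulations: verifying that collapsing the surviving factor of the pretransfer turns the trace of the diagonal into the trace of the identity, and invoking symmetric monoidality of rational homology to evaluate that trace as the Euler characteristic. Both are standard, consistent with the proposition being a standard fact; the step most prone to error is the bookkeeping in \eqref{pretransfer} of which tensor factor is evaluated and which survives, so I would take care to pin down the ordering before collapsing.
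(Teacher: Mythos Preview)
Your proposal is correct and follows essentially the same approach as the paper: restrict to a single fiber to get the pretransfer followed by the inclusion, use connectedness to reduce to the categorical trace of the identity, and then pass to rational homology to identify that trace with $\chi(X_y)$. The paper's argument is the terse paragraph immediately preceding the proposition, and your write-up is a careful unpacking of exactly those steps (with the minor and harmless reordering of collapsing before rationalizing rather than after).
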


Now we may show that Theorems \ref{intro_multiplicative} and \ref{intro_functorial} are equivalent.

\begin{prop}\label{equivalence}
	The Becker-Gottlieb transfer is functorial on $\pi_0$ iff the Euler characteristic is multiplicative for all fiber sequences of finitely dominated spaces.
\end{prop}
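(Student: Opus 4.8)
The plan is to reduce both conditions to a single numerical identity about Euler characteristics of fibers, using \autoref{transfer_on_pi0} to translate the transfer on $\pi_0$ into multiplication by an Euler characteristic, and \autoref{transfer_additive} to pass between the connected and disconnected cases.

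First I would treat the case where all spaces are connected. Given composable fibrations $X \xrightarrow{p} Y \xrightarrow{q} Z$ with $X$, $Y$, $Z$ connected and finitely dominated fibers, write $F$ for the fiber of $p$, $G$ for the fiber of $q$, and $E$ for the fiber of $q\circ p$ over a point $z\in Z$. Restricting $p$ over $G = q^{-1}(z)$ exhibits $E$ as the total space of a fiber sequence $F \to E \to G$, so \autoref{extension_of_fd} guarantees $E$ is finitely dominated. By \autoref{transfer_on_pi0}, on $\pi_0$ the maps $p^!$, $q^!$, and $(q\circ p)^!$ are multiplication by $\chi(F)$, $\chi(G)$, and $\chi(E)$ respectively. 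Hence $p^!\circ q^!$ is multiplication by $\chi(F)\chi(G)$ while $(q\circ p)^!$ is multiplication by $\chi(E)$, and functoriality on $\pi_0$ for this pair is exactly the identity $\chi(E) = \chi(F)\chi(G)$, i.e. multiplicativity for the fiber sequence $F \to E \to G$. Since every fiber sequence of finitely dominated spaces with connected base and total space arises in this way (take $Z = *$, so that $G = Y$ is the base and $E = X$ is the total space), the two statements coincide in the connected case.

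Next I would bootstrap to the general case using additivity. For the implication (multiplicativity $\Rightarrow$ functoriality), given arbitrary $p$ and $q$, both $(q\circ p)^!$ and $p^!\circ q^!$ are maps of wedges indexed by $\pi_0$, so by \autoref{transfer_additive} it suffices to compare them after restricting to a single component of $Z$ and projecting to a single component of $X$. Decomposing $Y = \coprod_\alpha Y_\alpha$ and $X = \coprod_{\alpha,\beta} X_{\alpha\beta}$ into components lying over these, each restricted fibration is connected over a connected base, so the connected case applies term by term: the relevant coefficient of $p^!\circ q^!$ is $\chi((Y_\alpha)_z)\,\chi((X_{\alpha\beta})_y)$, that of $(q\circ p)^!$ is $\chi(E_{\alpha\beta})$ for the composite fiber $E_{\alpha\beta}$, and multiplicativity for the fiber sequence $(X_{\alpha\beta})_y \to E_{\alpha\beta} \to (Y_\alpha)_z$ equates them. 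For the reverse implication, given a fiber sequence $F \to E \to B$ with $B$ connected, I would apply functoriality to $E \xrightarrow{p} B \xrightarrow{q} *$, decompose $E = \coprod_\gamma E_\gamma$ into its components (each a connected fibration over $B$ with fiber $F_\gamma$, where $F = \coprod_\gamma F_\gamma$), read off $\chi(E_\gamma) = \chi(F_\gamma)\chi(B)$ from the connected case, and sum over $\gamma$ using additivity of $\chi$ to obtain $\chi(E) = \chi(F)\chi(B)$.

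The main obstacle I anticipate is not any single deep step — the heart of the matter is the one-line translation via \autoref{transfer_on_pi0} in the connected case — but rather the organizational care demanded by the reduction: one must check that the component decompositions of $X$, $Y$, and $Z$ are mutually compatible, so that each piece is genuinely a connected fibration over a connected base to which \autoref{transfer_on_pi0} applies, and that additivity of the transfer lines up with additivity of the Euler characteristic across the resulting wedge summands. I would also take care to confirm that restricting a fibration over a path component of the base, or to a path component of the total space, again yields a fibration with finitely dominated fibers, so that every term in sight is legitimately of the form handled by the connected case.
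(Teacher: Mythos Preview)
Your proposal is correct and follows essentially the same approach as the paper's proof: reduce to the connected case via additivity (\autoref{transfer_additive}), then use \autoref{transfer_on_pi0} to identify each transfer on $\pi_0$ with multiplication by the Euler characteristic of the appropriate fiber, so that functoriality for $E \xrightarrow{p} B \xrightarrow{q} *$ becomes exactly the identity $\chi(E) = \chi(F)\chi(B)$. The paper is terser about the component decomposition (it simply asserts ``without loss of generality'' all spaces are connected), but the substance is the same.
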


\begin{proof}
	We first prove ``$\Leftarrow$.'' Let $p\colon X \to Y$ and $q\colon Y \to Z$ be fibrations with finitely dominated fibers. Since the transfer is additive over components of both the base and total space (\autoref{transfer_additive}), without loss of generality $X$, $Y$ and $Z$ are connected. Let $F$, $E$, and $B$ be defined as pullbacks of $X$, $Y$, and $Z$ as follows:
	\[ \xymatrix{
		F \ar[d] \ar[r] & E \ar[d] \ar[r] & X \ar[d]^-p \\
		{*} \ar[r] & B \ar[d] \ar[r] & Y \ar[d]^-q \\
		& {*} \ar[r] & Z.
	} \]
	By \autoref{transfer_on_pi0}, on $\pi_0$, $q^!$ multiplies by $\chi(B)$, $p^!$ multiplies by $\chi(F)$, and $(q \circ p)^!$ multiplies by $\chi(E)$. So if the Euler characteristic is multiplicative, then the transfer is functorial on $\pi_0$.
	
	Now we prove ``$\Rightarrow$.'' Let $F \to E \to B$ be a fiber sequence of finitely dominated spaces. The Euler characteristic is also additive over components, so without loss of generality $B$ and $E$ are connected. Let $p$ be the map $E \to B$ and $q$ the projection $B \to *$. Then all three of their suspension spectra have $\pi_0 = \Z$. By \autoref{transfer_on_pi0}, $q^!$ multiplies by $\chi(B)$, $p^!$ multiplies by $\chi(F)$, and $(q \circ p)^!$ multiplies by $\chi(E)$. So if the transfer is functorial on $\pi_0$, then the Euler characteristic is multiplicative.
\end{proof}

\section{First proof}

In this section we give a proof of \autoref{intro_multiplicative}. By \autoref{equivalence}, this also proves \autoref{intro_functorial}. We start by reducing to the case of an $n$-sheeted cover $\{1,\ldots,n\} \to E \to B$.

%

\begin{prop}
	The Euler characteristic is multiplicative for all fiber sequences if it is multiplicative for all fiber sequences with $F$ discrete (i.e. $n$-sheeted covers of finitely dominated spaces).
\end{prop}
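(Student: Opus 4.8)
The plan is to compute $\chi(E)$ in terms of twisted Euler characteristics of the base, in which the monodromy representations on the homology of the fiber appear, and then to recognize $n$-sheeted covers as precisely the case of permutation representations. First I would use additivity of the Euler characteristic over connected components (as in the proof of \autoref{equivalence}) to assume $B$ is connected, and set $\pi = \pi_1(B)$. Passing to the universal cover, the singular chains of the total space are computed by $C_*(E;k) \simeq C_*(\tilde B;k)\otimes_{k[\pi]} C_*(F;k)$, where $C_*(F;k)$ carries the monodromy action. Since $F$ and $B$ are finitely dominated, all homology groups in sight are finite-dimensional, so additivity of Euler characteristics across the Leray--Serre spectral sequence yields
\[ \chi(E) = \sum_i (-1)^i \chi(B;H_i(F;k)), \]
where $\chi(B;\mathcal M)$ denotes the Euler characteristic of $B$ with coefficients in a local system $\mathcal M$. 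As $\chi(F) = \sum_i (-1)^i \dim_k H_i(F;k)$, the theorem reduces to the single identity
\[ \chi(B;\mathcal M) = (\dim_k \mathcal M)\,\chi(B) \]
for every finite-dimensional $k[\pi]$-module $\mathcal M$.

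Next I would record that this identity for permutation modules is exactly multiplicativity for covers: an $n$-sheeted cover $S \to E_0 \to B$ has $H_*(E_0;k) = H_*(B;k[S])$, so $\chi(E_0) = \chi(B;k[S])$ for the permutation module $k[S]$ on the monodromy $\pi$-set $S$, and the asserted value $\chi(E_0) = n\,\chi(B)$ is the above identity for $\mathcal M = k[S]$. Each such $E_0$ is again finitely dominated by \autoref{extension_of_fd}, so we stay in range. The logic of the proposition is then: given multiplicativity for all covers, the identity holds for all permutation modules $k[\pi/H]$ with $H$ of finite index, and I must upgrade this to arbitrary $\mathcal M$. It is worth noting that the permutation case is in fact elementary on its own, since restriction $K_0(k[\pi]) \to K_0(k[H])$ scales the augmentation rank by the index $[\pi:H]$; applied to the finitely dominated $k[\pi]$-complex $C_*(\tilde B)$ this gives $\chi(E_0) = [\pi:H]\,\chi(B)$ directly.

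The hard part will be the upgrade from permutation modules to arbitrary $\mathcal M$, and this is exactly where the cell-by-cell induction available for finite complexes breaks down. For infinite $\pi$ the permutation modules do not span $G_0(k[\pi])$ -- already for $\pi=\Z$ one misses every representation with a non-root-of-unity eigenvalue -- so there is no formal way to rewrite a general fiber in terms of covers. Bridging this gap requires genuine input about $K_0$ of the group ring, which is the role of Swan's theorem: phrased through the pairing
\[ \mathcal M \longmapsto \chi\bigl(C_*(\tilde B)\otimes_{k[\pi]}\mathcal M\bigr) \]
against the class $[C_*(\tilde B)]\in K_0(k[\pi])$, what must be shown is that the difference class $[\mathcal M]-(\dim_k\mathcal M)[k]$ pairs trivially with $[C_*(\tilde B)]$. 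I would isolate this vanishing as the key lemma, to be established in the next step using Swan's theorem; the reductions above then reduce the general fiber sequence to it, while the permutation (cover) case supplies the anchoring values. This localizes all of the difficulty into a single statement about the rank function on $K_0$ of a group ring, consistent with the fact that the elementary product and finite-base arguments never needed more than the Künneth formula.
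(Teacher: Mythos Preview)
Your proposal does not actually carry out the reduction the proposition asks for. You correctly reduce general multiplicativity to the identity $\chi(B;\mathcal M)=(\dim_k\mathcal M)\,\chi(B)$ for all finite-dimensional $k[\pi]$-modules $\mathcal M$, and you observe that the hypothesis supplies this for permutation modules. But then, as you yourself note, permutation modules do not span for infinite $\pi$, so the hypothesis gives no leverage toward general $\mathcal M$; you instead propose to establish the general case directly via Swan's theorem. That is not a reduction to the discrete-fiber case---it is an attempt at the main theorem outright, with the entire difficulty deferred to a ``key lemma'' whose proof makes no use of the hypothesis. (Your aside that the permutation case is itself elementary because ``restriction $K_0(k[\pi])\to K_0(k[H])$ scales the augmentation rank by $[\pi:H]$'' is circular: on free modules this is obvious, but asserting it on all of $K_0(k[\pi])$, and hence on the class $[C_*(\tilde B)]$, is precisely the cover multiplicativity you were supposed to be assuming, not proving.)

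The paper's argument is short and genuinely uses the hypothesis. Take $k=\mathbb F_2$; then $GL(H_*(F;\mathbb F_2))$ is a \emph{finite} group, so the monodromy $\pi_1(B)\to GL(H_*(F;\mathbb F_2))$ has finite-index kernel. This gives an $n$-sheeted cover $\tilde B\to B$, and pulling back yields $\tilde E\to\tilde B$ with fiber $F$ and \emph{trivial} monodromy on $H_*(F;\mathbb F_2)$. For this orientable fibration the Serre spectral sequence has untwisted $E^2$-page, and the classical argument gives $\chi(\tilde E)=\chi(\tilde B)\chi(F)$. Now invoke the hypothesis twice, for the $n$-sheeted covers $\tilde E\to E$ and $\tilde B\to B$: one gets $\chi(\tilde E)=n\chi(E)$ and $\chi(\tilde B)=n\chi(B)$, and dividing by $n$ finishes. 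The idea you are missing is that one need not handle arbitrary $\mathcal M$ at this stage---one can kill the twisting by passing to a finite cover, and the discrete-fiber hypothesis is exactly what lets you descend afterward. Swan's theorem enters only later, to handle the remaining discrete-fiber case.
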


\begin{proof}
Let $F \to E \to B$ be any fiber sequence with $B$, $E$, and $F$ finitely dominated. The monodromy of the fibration defines a homomorphism
$$
\pi_1(B) \to \text{GL}(H_\ast(F;\Bbb Z/2))
$$
into a finite group. Its kernel is a finite index subgroup, which determines a finite covering space $\tilde B \to B$.

The base change (pullback) of the fibration $E\to B$ to $\tilde B$ yields a new fibration
$\tilde E\to \tilde B$ with fiber $F$. This new fibration is orientable in the sense that the action of $\pi_1(\tilde B)$ on $H_*(F;\Z/2)$ is trivial. Therefore the $E^2$ page of its Serre spectral sequence is $H_p(\tilde B;H_q(F;\Z/2))$, where the coefficients are untwisted.

By \autoref{extension_of_fd} applied to $n$-sheeted covers, the spaces $\tilde E$ and $\tilde B$ are again finitely dominated, so they have Euler characteristics. As mentioned in the introduction, these Euler characteristics can be computed from the homology with mod 2 coefficients. It follows by a standard spectral sequence argument (see e.g. \cite[p.481]{spanier}) that $\chi(\tilde E) = \chi(\tilde B)\chi(F)$.

If we have multiplicativity for $n$-sheeted covers, then the two $n$-sheeted covers $\tilde E \to E$ and $\tilde B \to B$ give
\[ \chi(\tilde E) = n\chi(E), \quad \chi(\tilde B) = n\chi(B). \]
We conclude that
\[ \chi(E) = \frac1n \chi(\tilde E) = \frac1n \chi(\tilde B)\chi(F) = \chi(B)\chi(F). \qedhere \]
\end{proof}

We next recall some algebraic constructions. Let $\pi$ be any discrete group and let $\Z[\pi]$ be the corresponding group ring. Recall that $K_0(\Z[\pi])$ is the group completion of the set of isomorphism classes of finitely generated projective $\Z[\pi]$-modules $P$ under direct sum.

Equivalently, $K_0(\Z[\pi])$ is the free abelian group on the set of quasi-isomorphism classes of perfect chain complexes over $\Z[\pi]$, modulo the relation $[A]+[C] = [B]$ for any exact triangle $A\to B\to C$ in the derived category of perfect chain complexes. A chain complex is perfect if it is a retract up to quasi-isomorphism of a bounded chain complex of finitely generated free modules. Equivalently, if it is quasi-isomorphic to a bounded complex of finitely generated projective modules. See e.g. \cite{weibel_kbook} for more details.

We adopt the convention that all modules and chain complexes have \emph{right} action by $\Z[\pi]$.

The groups $K_0(\Z[\pi])$ define a functor in $\pi$, by extension of scalars. The homomorphisms $1 \to \pi \to 1$ make $K_0(\Z) \cong \Z$ into a retract of $K_0(\Z[\pi])$ corresponding to the free $\Z[\pi]$-modules. We let $\tilde K_0(\Z[\pi])$ be the complementary summand.

\begin{df}
	We define the \emph{rank} of a perfect $\Z$-chain complex $P$ to be its image in $K_0(\Z) \cong \Z$.
\end{df}
For bounded complexes of finitely generated projective modules, this is computed by taking the alternating sum of the ranks.

\begin{lem}\label{rank=euler}
	If $B$ is any finitely dominated space, the rank of the singular chain complex $C_*(B)$, as a chain complex of abelian groups, equals the Euler characteristic $\chi(B)$.
\end{lem}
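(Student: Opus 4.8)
The plan is to reduce the statement to the classical fact that, over a field, the Euler characteristic of a bounded chain complex equals the Euler characteristic of its homology. First I would record that $C_*(B)$ really is a perfect $\Z$-chain complex: since $B$ is a homotopy retract of a finite complex $K$, the complex $C_*(B)$ is a retract up to quasi-isomorphism of the bounded free complex $C_*(K)$, and a retract of a perfect complex is perfect. We may therefore choose a bounded complex $P_*$ of finitely generated free $\Z$-modules with $P_* \simeq C_*(B)$, and by the remark following the definition of rank we have $\rk(C_*(B)) = \sum_i (-1)^i \rk_{\Z}(P_i)$.

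Next I would compare this with homology over a field. Fix any field $k$ and set $r_i := \rk_{\Z}(P_i)$. Because each $P_i$ is free, the complex $P_* \otimes_{\Z} k$ is a bounded complex of $k$-vector spaces with $\dim_k(P_i \otimes_{\Z} k) = r_i$, and by flatness it computes $H_*(B;k)$. The elementary fact that for a bounded complex of finite-dimensional vector spaces the alternating sum of the dimensions of the chain groups equals the alternating sum of the dimensions of the homology groups then gives
\[ \sum_i (-1)^i r_i \;=\; \sum_i (-1)^i \dim_k(P_i \otimes_{\Z} k) \;=\; \sum_i (-1)^i \dim_k H_i(B;k). \]
The right-hand side is exactly $\chi(B)$, by the definition of the Euler characteristic of a finitely dominated space recalled in the introduction. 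Combining this with the formula $\rk(C_*(B)) = \sum_i (-1)^i r_i$ completes the proof.

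I expect the only point needing genuine care is the passage from the integral chain complex to field coefficients: one must use that tensoring the \emph{free} model $P_*$ with $k$ simultaneously preserves the numerical ranks ($\dim_k(P_i\otimes_\Z k)=r_i$) and continues to compute $H_*(B;k)$, which is precisely where freeness (flatness) of $P_*$ enters. Everything else is bookkeeping: the perfectness of $C_*(B)$ is immediate from finite domination, and the invariance of the Euler characteristic under passage to homology is classical linear algebra. As a sanity check, the final computation is manifestly independent of the chosen field $k$, consistent with the fact that $\chi(B)$ does not depend on $k$.
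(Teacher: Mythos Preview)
Your proof is correct. The crucial step is that over $\Z$ every perfect complex is quasi-isomorphic to a bounded complex of finitely generated \emph{free} modules (because finitely generated projective $\Z$-modules are free), which lets you compute the rank as an alternating sum and then compare with $\chi(B)$ via field coefficients and the homological definition of $\chi$ recalled in the introduction.

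The paper takes a different, more topological route: it notes that $\Sigma^2 B$ is homotopy equivalent to a genuine finite complex (a simply-connected finitely dominated space has vanishing Wall obstruction in $\tilde K_0(\Z)=0$) and that double suspension leaves the rank of the chain complex unchanged; for a finite complex the identification of rank with Euler characteristic is then immediate. In effect, the paper invokes the second description of $\chi(B)$ from the introduction ($\chi$ of a finite model of $\Sigma^2 B$), whereas you invoke the first (alternating sum of $\dim_k H_i(B;k)$). Your argument is more self-contained, trading the topological input ``$\Sigma^2 B$ is finite'' for the algebraic input ``projective over $\Z$ is free''; the paper's version is a one-line reduction once that topological fact is granted.
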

\begin{proof}
	This can be proven in several ways -- for instance, suspending $B$ twice makes it finite without changing the rank of $C_*(B)$, and then it is equivalent to a bounded chain complex of finitely generated free modules, so its rank is equal to $\chi(B)$.
\end{proof}

We next recall a result of Swan from 
\cite{swan_finite}, and a corollary of this result suggested by Oscar Randall-Williams.

\begin{thm}[Swan]\label{swan}
	If $\pi$ is finite then $\tilde K_0(\Z[\pi])$ is torsion.
\end{thm}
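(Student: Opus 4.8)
The plan is to show that $\tilde K_0(\Z[\pi])$ is a finitely generated abelian group of rank zero; any such group is finite, and in particular torsion. The two ingredients are (i) a finiteness statement coming from the theory of $\Z$-orders, and (ii) the genuinely representation-theoretic input, due to Swan, that finitely generated projective $\Z[\pi]$-modules become free after $\otimes_\Z\Q$. I expect (ii) to be the crux; everything else is formal bookkeeping with orders.

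First I would record the ambient algebraic setup. Since $\pi$ is finite, Maschke's theorem makes $\Q[\pi]$ a semisimple $\Q$-algebra, so by Wedderburn $\Q[\pi]\cong\prod_i M_{n_i}(D_i)$ and $K_0(\Q[\pi])$ is free abelian of finite rank, with one generator per simple factor. Moreover $\Z[\pi]$ is a $\Z$-order in $\Q[\pi]$, finitely generated as a $\Z$-module. For such orders the Jordan--Zassenhaus theorem bounds the number of isomorphism classes of lattices of a given rank, and the standard structure theory of projective modules over orders then yields that $K_0(\Z[\pi])$, hence its summand $\tilde K_0(\Z[\pi])$, is finitely generated. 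It therefore remains to show that $\tilde K_0(\Z[\pi])$ has rank zero, i.e. that it is killed by $\otimes\Q$.

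To detect the rank I would rationalize. Extension of scalars along $\Z[\pi]\to\Q[\pi]$ induces a map $r\colon K_0(\Z[\pi])\to K_0(\Q[\pi])$ into the free abelian group above. Swan's key result is that for any finitely generated projective $\Z[\pi]$-module $P$, the module $P\otimes_\Z\Q$ is free over $\Q[\pi]$, say $P\otimes_\Z\Q\cong\Q[\pi]^m$. A rank count shows that this $m$ agrees with the image of $[P]$ under the augmentation retraction $K_0(\Z[\pi])\to K_0(\Z)$, so $r$ vanishes on the reduced summand and $\tilde K_0(\Z[\pi])\subseteq\ker r$. Finally, the kernel of $r$ is finite: this is a standard finiteness theorem for projective class groups of $\Z$-orders, ultimately resting on the finiteness of the ideal class groups of the number fields occurring in $\Q[\pi]$ (again via Jordan--Zassenhaus and strong approximation). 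Being contained in a finite group, $\tilde K_0(\Z[\pi])$ is torsion.

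The main obstacle is Swan's rational-freeness statement. Phrased via the Hattori--Stallings trace, it asserts that the trace of a finitely generated projective is concentrated at the identity conjugacy class, and proving it requires a genuine local, prime-by-prime analysis of projectives over the completions $\Z_p[\pi]$ rather than any formal manipulation in $K$-theory. It is precisely this input that the present proof takes as a black box, and that the second proof in the paper replaces by Linnell's theorem on the Hattori--Stallings trace.
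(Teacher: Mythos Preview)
The paper does not prove this theorem; it is quoted as a result of Swan with a citation to \cite{swan_finite}, and is used as a black box in the proof of \autoref{same_rank}. There is therefore no in-paper argument to compare your proposal against.

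That said, your outline is a correct route to the result. You reduce the torsion statement to Swan's rational-freeness theorem (finitely generated projective $\Z[\pi]$-modules become free after $\otimes_\Z\Q$) together with the standard finiteness of $\ker\bigl(K_0(\Z[\pi])\to K_0(\Q[\pi])\bigr)$ for $\Z$-orders. Two small comments. First, once you know $\tilde K_0(\Z[\pi])\subseteq\ker r$ and that $\ker r$ is finite, you are done; the separate finite-generation step via Jordan--Zassenhaus is then redundant. Second, it is worth observing that the rational-freeness ingredient you invoke is exactly the ``more elementary result of Swan'' that the paper itself highlights in the remark following \autoref{same_rank}; there the authors note that this result alone already suffices for their application, bypassing the full torsion statement. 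So your black box and the paper's alternative black box coincide.
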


Let $F$ be any finite set with left $\pi$-action. Let $F^t$ denote the same finite set with trivial $\pi$-action. Note that $\pi$ does not have to be finite.

\begin{prop}\label{same_rank}
For each perfect $\Z[\pi]$-chain complex $P$, the perfect $\Z$-chain complexes
$$
P \otimes_{\Z[\pi]} \Z[F] \quad \text{ and }\quad 
P \otimes_{\Z[\pi]} \Z[F^t] 
$$
have the same rank.
\end{prop}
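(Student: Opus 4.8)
The plan is to package both constructions as homomorphisms out of $K_0(\Z[\pi])$ and to show that their difference vanishes. First I would note that, representing $P$ by a bounded complex of finitely generated projective $\Z[\pi]$-modules, each of $-\otimes_{\Z[\pi]}\Z[F]$ and $-\otimes_{\Z[\pi]}\Z[F^t]$ carries $P$ to a bounded complex of finitely generated projective abelian groups, and sends exact triangles to exact triangles. Hence
\[ [P]\mapsto \rk\big(P\otimes_{\Z[\pi]}\Z[F]\big), \qquad [P]\mapsto \rk\big(P\otimes_{\Z[\pi]}\Z[F^t]\big) \]
are well defined homomorphisms $K_0(\Z[\pi])\to K_0(\Z)\cong\Z$. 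Writing $d$ for their difference, the goal becomes to show $d=0$.

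The key reduction is that the left $\pi$-action on the finite set $F$ factors through a finite quotient $G=\pi/N$, where $N$ is the kernel of $\pi\to\operatorname{Sym}(F)$. Consequently both $\Z[F]$ and $\Z[F^t]$ are restrictions along $\Z[\pi]\to\Z[G]$ of the evident $\Z[G]$-modules. Setting $Q:=P\otimes_{\Z[\pi]}\Z[G]$ (again perfect, since extension of scalars preserves finitely generated projectives), associativity of the tensor product yields isomorphisms of $\Z$-complexes
\[ P\otimes_{\Z[\pi]}\Z[F]\cong Q\otimes_{\Z[G]}\Z[F], \qquad P\otimes_{\Z[\pi]}\Z[F^t]\cong Q\otimes_{\Z[G]}\Z[F^t]. \]
These have the same ranks as the right-hand sides, so it suffices to prove the proposition with the arbitrary group $\pi$ replaced by the \emph{finite} group $G$ (and $P$ by $Q$).

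For a finite group $G$, I would run the same bookkeeping: the difference of the two rank homomorphisms is a homomorphism $d_G\colon K_0(\Z[G])\to\Z$. Evaluating on the free module gives $\Z[G]\otimes_{\Z[G]}\Z[F]=\Z[F]\cong\Z^{|F|}$ and likewise $\Z[G]\otimes_{\Z[G]}\Z[F^t]\cong\Z^{|F|}$, so both homomorphisms send $[\Z[G]]$ to $|F|$ and $d_G$ kills the free summand $K_0(\Z)\hookrightarrow K_0(\Z[G])$. Thus $d_G$ factors through $\tilde K_0(\Z[G])$, which is torsion by Swan's theorem (\autoref{swan}); since any homomorphism from a torsion group to $\Z$ is zero, $d_G=0$. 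Unwinding the reduction gives $d=0$, proving the proposition.

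The only nontrivial external input is Swan's theorem, and the step I expect to require the most care is the reduction to the finite quotient $G$: because $\pi$ may be infinite, $\tilde K_0(\Z[\pi])$ need not be torsion, so one cannot apply Swan directly. The work is in checking that both tensor constructions genuinely descend from $\Z[G]$-modules and that extending scalars along $\Z[\pi]\to\Z[G]$ is compatible with computing ranks over $\Z$ --- both of which become routine once $P$ is modeled by finitely generated projectives.
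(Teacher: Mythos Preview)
Your proposal is correct and follows essentially the same route as the paper: reduce to a finite group through which the $\pi$-action on $F$ factors (you use the image $G=\pi/N$, while the paper uses the full automorphism group $\Pi=\mathrm{iso}(F)$, a cosmetic difference), then observe that the two rank homomorphisms out of $K_0$ agree on the free summand and invoke Swan's theorem to kill the difference on the torsion summand $\tilde K_0$.
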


\begin{proof}
Let $\Pi = \text{iso}(F)$, the automorphisms of
the set $F$. Then $\Pi$  is finite, and the $\pi$-action on $F$ defines homomorphism
 $\pi \to \Pi$. We get isomorphisms of chain complexes
\begin{align*}
	P \otimes_{\Z[\pi]} \Z[F] &\cong (P \otimes_{\Z[\pi]} \Z[\Pi]) \otimes_{\Z[\Pi]} \Z[F]\, , \\
	P \otimes_{\Z[\pi]} \Z[F^t] &\cong (P \otimes_{\Z[\pi]} \Z[\Pi]) \otimes_{\Z[\Pi]} \Z[F^t].
\end{align*}
It therefore suffices to prove the theorem for the finite group $\Pi$ and the perfect $\Z[\Pi]$-chain complex $P \otimes_{\Z[\pi]} \Z[\Pi]$.

Without loss of generality, therefore, $\pi$ is finite. We next observe that the two complexes $P\otimes_{\Bbb Z[\pi]} \Bbb Z[F]$ and $P\otimes_{\Bbb Z[\pi]} \Bbb Z[F^t]$ clearly have the same rank when $P$ is a bounded complex of free $\Z[
\pi]$-modules, so it suffices to extend this to $P$ perfect. To do this we write the operations as homomorphisms
$$
\rho,\rho^t: K_0(\Bbb Z[\pi]) \to K_0(\Bbb Z)
$$
defined by $\rho(P) = P\otimes_{\Bbb Z[\pi]} \Bbb Z[F]$, and 
$\rho^t(P) = P\otimes_{\Bbb Z[\pi]} \Bbb Z[F^t]$.

These homomorphisms are of the form $\Z \oplus A \to \Z$ with $A$ a torsion group, by \autoref{swan}. We have checked that they agree on $\Z$. Since $A$ is torsion, it follows that they agree on all of $\Z \oplus A$. Therefore $P\otimes_{\Bbb Z[\pi]} \Bbb Z[F]$ and $P\otimes_{\Bbb Z[\pi]} \Bbb Z[F^t]$ have the same rank for any perfect complex $P$.
\end{proof}

\begin{rmk}
We could also prove \autoref{same_rank} from a more elementary result of Swan: if $\pi$ is finite and $P$ is any projective $\Z[\pi]$-module, then the rationalization $P \otimes \Q$ is a free $\Q[\pi]$-module \cite[Theorem 4.2]{swan_finite}. It follows that the homomorphisms $\rho$ and $\rho^t$ in the proof of \autoref{same_rank} become equal after rationalization. But the map $K_0(\Z) \to K_0(\Q)$ is an isomorphism, so $\rho$ and $\rho^t$ are equal before rationalization as well.
\end{rmk}

Finally, suppose $F \to E \to B$ is a fiber sequence with $B$ finitely dominated and connected, and $F$ a discrete set of cardinality $n$. We want to show that $\chi(E) = n\chi(B)$. Without loss of generality, $E$ is connected as well.

Let $\pi = \pi_1(B)$, which acts on the fiber $F$ on the left and on the universal cover $\ti B$ on the right. Let $F^t$ be the same set as $F$ but with trivial $G$-action. Then we have identifications
\[ E = \ti B \times_\pi F, \qquad B \times F = \ti B \times_\pi F^t. \]
Taking chains, we have identifications
\begin{align}\label{id1}
	C_*(E) &\cong C_*(\ti B) \otimes_{\Z[\pi]} \Z[F], \\
	\label{id2}
	C_*(B \times F) &\cong C_*(\ti B) \otimes_{\Z[\pi]} \Z[F^t].
\end{align}

\begin{thm}\label{main_step}
	The finitely dominated spaces $E$ and $B \times F$ have the same Euler characteristic.
\end{thm}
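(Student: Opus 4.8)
The plan is to recognize the two chain complexes in \eqref{id1} and \eqref{id2} as precisely the two extensions of scalars appearing in \autoref{same_rank}, applied to the single perfect complex $P = C_*(\ti B)$, and then to translate the resulting equality of ranks into an equality of Euler characteristics via \autoref{rank=euler}.

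First I would record the one genuine input: since $B$ is finitely dominated, the singular chain complex $C_*(\ti B)$ of its universal cover, regarded as a complex of $\Z[\pi]$-modules (with $\pi = \pi_1(B)$ acting via deck transformations), is perfect. This is Wall's characterization of finite domination: $B$ is a homotopy retract of a finite complex exactly when $C_*(\ti B)$ is a retract up to quasi-isomorphism of a bounded complex of finitely generated free $\Z[\pi]$-modules, i.e.\ perfect in the sense recalled above. I expect this to be the only step requiring justification beyond a citation, and hence the ``main obstacle,'' though it is standard; everything after it is a formal consequence of results already in hand.

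With $P = C_*(\ti B)$ established as a perfect $\Z[\pi]$-complex, \autoref{same_rank} applies verbatim: the $\Z$-chain complexes
\[
	P \otimes_{\Z[\pi]} \Z[F] \qquad\text{and}\qquad P \otimes_{\Z[\pi]} \Z[F^t]
\]
have the same rank in $K_0(\Z) \cong \Z$. By the identifications \eqref{id1} and \eqref{id2}, these are exactly $C_*(E)$ and $C_*(B \times F)$, so the two singular chain complexes have equal rank as complexes of abelian groups.

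Finally I would invoke \autoref{rank=euler} twice: the rank of $C_*(E)$ equals $\chi(E)$ and the rank of $C_*(B \times F)$ equals $\chi(B \times F)$ (both spaces being finitely dominated, $E$ by \autoref{extension_of_fd} and $B \times F$ as a finite disjoint union of copies of $B$). Combining, $\chi(E) = \chi(B \times F)$, which is the assertion. No delicate estimate or induction is needed; the entire content has been packaged into \autoref{same_rank}, whose proof already absorbed the appeal to Swan's theorem on the torsionness of $\tilde K_0(\Z[\pi])$ for finite $\pi$.
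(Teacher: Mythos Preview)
Your proposal is correct and follows essentially the same route as the paper's proof: both set $P = C_*(\ti B)$, observe it is perfect over $\Z[\pi]$, apply \autoref{same_rank} to the identifications \eqref{id1} and \eqref{id2}, and then invoke \autoref{rank=euler}. You supply a bit more justification (Wall's characterization for perfectness of $C_*(\ti B)$, and why $E$ and $B\times F$ are finitely dominated), but the argument is otherwise identical.
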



\begin{proof}
The chain complex of $\Z[\pi]$-modules $C_*(\ti B)$ is perfect and therefore defines a class in $K_0(\Z[\pi]) $. By \autoref{same_rank}, the chain complexes \eqref{id1} and \eqref{id2} have the same rank. By \autoref{rank=euler}, these ranks are the Euler characteristics of $E$ and $B \times F$, respectively.
\end{proof}

Since $F$ is a finite set, this immediately implies that $\chi(E) = \chi(F)\chi(B)$, proving \autoref{intro_multiplicative}.

\section{Second proof}

In this section we give a second, independent proof of \autoref{intro_functorial}. By \autoref{equivalence}, this also proves \autoref{intro_multiplicative}.

This second proof is based on the results of \cite{CCRY}, more precisely on the following formula for the Becker-Gottlieb transfer of a composite. For a space $A$, let $LA = \maps(S^1,A)$ denote the free loop space. Denote the inclusion of constant loops and the evaluation at $1 \in S^1$ by
\[ A \xrightarrow{c} LA \xrightarrow{ev_1} A. \]

\begin{thm}[{\cite[Prop 6.7, Cor 6.10]{CCRY}}]\label{thm : CCRY}
Let $p:X\to Y$ and $q: Y\to Z$ be fibrations with finitely dominated fibers. Then $(q\circ p)^!$ is the composite

$$\Sigma^\infty_+Z\xrightarrow{c} \Sigma^\infty_+ LZ \xrightarrow{\tau} \Sigma^\infty_+ LY\xrightarrow{\chi} \Sigma^\infty_+X$$ 

where 
\begin{itemize}
    \item $\tau : \Sigma^\infty_+ LZ\to \Sigma^\infty_+ LY$ is the free loop space transfer, also known as the THH-transfer (\cite{LM}), and 
    \item $\chi$ is a twisted version of the Becker-Gottlieb transfer of $p$.
\end{itemize}

On the other hand, $p^!\circ q^!$ is the same composite, except with
\[ \Sigma^\infty_+ LY\xrightarrow{ev_1} \Sigma^\infty_+ Y\xrightarrow{c} \Sigma^\infty_+ LY \]
inserted before the final map in the composition.
\end{thm}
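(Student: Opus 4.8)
The plan is to work inside the bicategory of parametrized spectra and to derive both formulas from the behavior of bicategorical traces under composition, in the style of \cite{ponto_shulman_additivity,CCRY}. The structural fact I would isolate first is the identification of the \emph{shadow} of parametrized spectra over a space $A$: the trace of an endomorphism of a dualizable object over $A$ lands not in $\Sigma^\infty_+ A$ but in the shadow, which for this bicategory is $\Sigma^\infty_+ LA$. This is the topological Hochschild homology computation $\THH(\Sigma^\infty_+ \Omega A) \simeq \Sigma^\infty_+ LA$, and it is the source of the free loop space throughout the statement. In this picture the constant-loop inclusion $c$ and the evaluation $ev_1$ are precisely the comparison maps relating the naive target $\Sigma^\infty_+ A$ with the shadow $\Sigma^\infty_+ LA$.

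With this set up, I would compute $(q\circ p)^!$ as the trace of the fiberwise diagonal of the composite fibration $X \to Z$, and factor that diagonal through the intermediate space $Y$. The essential step is the composite (iterated) trace formula: the trace of the diagonal of $X \to Z$ can be evaluated in two stages, first along the fibers of $p$ and then along the fibers of $q$. Tracing along $p$ first does not return a map of spectra over $Y$; because of the shadow it returns a twisted family over $LY$, and this twisted family is exactly the twisted Becker--Gottlieb transfer $\chi\colon \Sigma^\infty_+ LY \to \Sigma^\infty_+ X$. Tracing the remaining $q$-direction is the trace of the fiberwise diagonal of $q$ computed in the shadow, which is precisely the free loop transfer $\tau\colon \Sigma^\infty_+ LZ \to \Sigma^\infty_+ LY$ of \cite{LM}. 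Precomposing with $c$ to enter the shadow over $Z$ then yields the claimed composite $\Sigma^\infty_+ Z \xrightarrow{c} \Sigma^\infty_+ LZ \xrightarrow{\tau} \Sigma^\infty_+ LY \xrightarrow{\chi} \Sigma^\infty_+ X$.

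For $p^! \circ q^!$ I would instead run the two traces \emph{separately} rather than as a single iterated trace. The outer transfer $q^!$ is computed by tracing in the shadow over $Z$ and then descending from $LY$ back to $Y$ via $ev_1$, landing in $\Sigma^\infty_+ Y$; the inner transfer $p^!$ re-enters the loop world via $c$ before the twisted transfer $\chi$ is applied. Thus composing the transfers one at a time forces the extra factor $\Sigma^\infty_+ LY \xrightarrow{ev_1} \Sigma^\infty_+ Y \xrightarrow{c} \Sigma^\infty_+ LY$ to be inserted before $\chi$, exactly as stated. Comparing the two formulas then isolates the entire discrepancy between functoriality and its failure in a single place: the map $c \circ ev_1$ on $\Sigma^\infty_+ LY$ in the second composite versus the identity in the first.

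The hard part will be making the twist precise and verifying that the two-stage trace genuinely computes the one-stage trace. Concretely, I expect the main obstacle to be tracking the monodromy data, since the trace along the $p$-fibers must be taken with the holonomy around each loop of $Y$ built in; this is what distinguishes $\chi$ from the ordinary Becker--Gottlieb transfer of $p$, and establishing it requires the full strength of the duality, shadow, and base change results of \cite{CCRY} for parametrized spectra. Once the shadow formalism is in place and the composite-trace formula is available, identifying $\tau$, $c$, and $ev_1$ is largely bookkeeping, but the coherence needed to justify that bookkeeping is the substantive content of the argument.
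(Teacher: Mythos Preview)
The paper does not actually prove this statement; it is imported wholesale from \cite[Prop~6.7, Cor~6.10]{CCRY} and used as a black box. So there is no ``paper's own proof'' to compare against here. That said, your outline is a faithful sketch of how the result is established in \cite{CCRY}: the shadow of parametrized spectra over $A$ is $\Sigma^\infty_+ LA$, the iterated-trace formula factors the trace of the composite fibration into a trace along $p$ (producing the twisted transfer $\chi$, landing in the shadow $\Sigma^\infty_+ LY$) followed by a trace along $q$ (the free-loop transfer $\tau$), and the discrepancy with $p^! \circ q^!$ arises exactly because running the two transfers separately forces you to leave the shadow via $ev_1$ and re-enter via $c$ between the two steps. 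Your diagnosis that the substantive work lies in setting up the shadow formalism and the composite-trace theorem is accurate; once those are in place the identifications of $c$, $ev_1$, $\tau$, and $\chi$ are indeed mostly unwinding definitions.
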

We will only need to recall from \cite{CCRY} how $\chi$ acts on $\pi_0$. Let $\gamma$ be a loop in $Y$, based at $y$, and let $X_y := p^{-1}(y)$ denote the fiber of $p$ at $y$. The loop $\gamma$ induces a monodromy self-equivalence $\gamma_* : X_y\to X_y$, well-defined up to homotopy. This map has a trace in the symmetric monoidal category of spectra, defined as in \eqref{pretransfer} as the composite of coevaluation, $\gamma_*$, and an evaluation map:
\begin{equation}\label{lefschetz}
	\mathbb S
	\to \Sigma^\infty_+ X_y \sma (\Sigma^\infty_+ X_y)^\vee
	\to \Sigma^\infty_+X_y \sma (\Sigma^\infty_+X_y)^\vee
	\to \mathbb S.
\end{equation}
On $\pi_0$, this returns an integer $L(\gamma)$, the Lefschetz number of the monodromy action of $\gamma$. The action of $\chi$ on $\gamma$ is very close to this:
\begin{prop}[{\cite[Prop 5.7]{CCRY}}]\label{trace_desc}
	$\chi(\gamma) \in \pi_0(\Sigma^\infty_+ X)$ is the trace of the map
	\[ (1,\gamma_*)\colon X_y \to X \times X_y \]
	in the symmetric monoidal category of spectra.
\end{prop}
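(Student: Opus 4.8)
The plan is to evaluate $\chi$ on $\pi_0$ by restricting the parametrized trace that defines it to the single loop $\gamma$, and to recognize the outcome as a generalized trace valued in $\Sigma^\infty_+ X$. Since $X_y$ is finitely dominated, $\Sigma^\infty_+ X_y$ is dualizable, and the map $(1,\gamma_*)\colon X_y \to X \times X_y$ induces, via the monoidality isomorphism $\Sigma^\infty_+(X \times X_y) \cong \Sigma^\infty_+ X \wedge \Sigma^\infty_+ X_y$, a morphism whose trace valued in $\Sigma^\infty_+ X$ is the composite
\[
\mathbb S \xrightarrow{\mathrm{coev}} \Sigma^\infty_+ X_y \wedge (\Sigma^\infty_+ X_y)^\vee \to \Sigma^\infty_+ X \wedge \Sigma^\infty_+ X_y \wedge (\Sigma^\infty_+ X_y)^\vee \xrightarrow{1 \wedge \mathrm{ev}} \Sigma^\infty_+ X,
\]
in direct analogy with the pretransfer \eqref{pretransfer} and the Lefschetz trace \eqref{lefschetz}. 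The goal is to identify this composite with $\chi(\gamma)$.

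First I would recall the construction of $\chi$ from \cite{CCRY}. Because $\pi_0(\Sigma^\infty_+ LY)$ is the free abelian group on $\pi_0(LY)$ and $[\gamma]$ is represented by the point $\gamma \in LY$, evaluating $\chi$ on $[\gamma]$ is the same as restricting the parametrized trace defining $\chi$ along the inclusion $\{\gamma\} \hookrightarrow LY$. As $\chi$ is a twisted Becker-Gottlieb transfer of $p$, this restriction is computed by the same coevaluation--diagonal--evaluation pattern as the untwisted pretransfer, with two modifications dictated by the loop: the fiberwise diagonal is twisted by the monodromy self-equivalence $\gamma_*$ of $X_y$, and the output leg records the inclusion $X_y \hookrightarrow X$ rather than collapsing $X$ to a point. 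Together these modifications are encoded by the map $(1,\gamma_*)$.

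The heart of the argument is matching this restricted parametrized trace with the generalized trace displayed above. I would obtain the identification from the base-change and restriction compatibilities of the categorical trace established in \cite{CCRY}: restricting a trace over $LY$ to a point $\gamma$ yields a fiberwise trace in spectra, and the twist carried by the $\gamma$-component of the free loop space is exactly $\gamma_*$. Applying $\pi_0$ to the resulting composite then gives the asserted equality.

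The main obstacle is precisely this last bookkeeping step. One must check that the dualizing data for $\Sigma^\infty_+ X_y$ used in the parametrized construction agrees with the duality used in the generalized trace, and that the monodromy $\gamma_*$ is inserted on the correct tensor factor so that the output lands in $\Sigma^\infty_+ X$, rather than being collapsed to $\mathbb S$---which would recover only the integer $L(\gamma)$. Conceptually, the proposition says that $\chi$ refines the Lefschetz number by remembering \emph{where} on $X$ the fixed-point contributions sit, and making this refinement precise is exactly the content that must be extracted from the trace formalism of \cite{CCRY}.
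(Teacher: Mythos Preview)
The paper does not give its own proof of this proposition: it is stated with the citation \cite[Prop 5.7]{CCRY} and then simply used. There is therefore nothing in the paper to compare your proposal against.

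Your write-up is not really an independent proof either. You correctly identify the shape of the argument---restrict the parametrized trace defining $\chi$ to the point $\gamma\in LY$, and recognize the result as the generalized trace of $(1,\gamma_*)$---but every substantive step (the base-change compatibility of the categorical trace, the identification of the monodromy twist, the matching of dualizing data) is explicitly deferred to the formalism of \cite{CCRY}. Your own final paragraph says as much: ``making this refinement precise is exactly the content that must be extracted from the trace formalism of \cite{CCRY}.'' So what you have written is a heuristic unpacking of what the cited proposition says, not an argument that establishes it. That is fine as exposition, and it matches how the paper treats the statement, but it should be labeled as such rather than as a proof.
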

In general, if we have a symmetric monoidal category with unit $I$, and a dualizable object $A$ with dual $A^*$, the trace of a map $f\colon A \to B \otimes A$ is defined to be the composite of coevaluation, $f$, and evaluation
\begin{equation*}
	I
	\to A \otimes A^*
	\to B \otimes A \otimes A^*
	\to B \otimes I
	\cong B,
\end{equation*}
see e.g. \cite{becker1976transfer,dold_puppe}.

So, the trace from \autoref{trace_desc} is the composite
\begin{equation}\label{trace_of_monodromy}
	\mathbb S
	\to \Sigma^\infty_+ X_y\sma (\Sigma^\infty_+ X_y)^\vee
	\to \Sigma^\infty_+ X\sma \Sigma^\infty_+X_y\sma (\Sigma^\infty_+X_y)^\vee
	\to \Sigma^\infty_+ X
\end{equation}
of coevaluation, the map $(1,\gamma_*)$, and evaluation.

\begin{rmk}
The map $\chi$ can also be described above $\pi_0$ in essentially the same way, by phrasing this description more coherently and thus defining a map of spaces $LY\to \Omega^\infty\Sigma^\infty_+ X$.
\end{rmk}

By \autoref{transfer_additive}, we can assume without loss of generality that $X$, $Y$, and $Z$ are connected, so that their suspension spectra all have $
\pi_0 = \Z$. Then the composite \eqref{trace_of_monodromy} on $\pi_0$ simplifies to the Lefschetz number \eqref{lefschetz}. Applying \autoref{thm : CCRY}, the transfer $(q\circ p)^!$ on $\pi_0$ takes the generator $1 \in \Z$ to an element in $H_0(LY)$, which is a weighted sum of free loops
\begin{equation}\label{sum}
	\sum_i a_i \gamma_i, \quad a_i \neq 0,
\end{equation}
and then to the corresponding weighted sum of Lefschetz numbers
\[ \sum_i a_i L(\gamma_i). \]
On the other hand, $p^!\circ q^!$ replaces all the loops $\gamma_i$ by constant loops, and then takes their monodromy, giving
\[ \sum_i a_i L(\id) = \sum_i a_i \chi(X_y). \]

To show these agree, it suffices to show that $L(\gamma_i) = L(\id)$ for each of the loops $\gamma_i$ that appear in the sum \eqref{sum}.


By \cite[7.12]{LM}, we have a commutative diagram
$$\xymatrix{ \Sigma^\infty_+Z \ar@/_2em/[dd]_c \ar[d]\\  A(Z)\ar[r]\ar[d] & A(Y) \ar[d] \\ 
 \Sigma^\infty_+ LZ\ar[r]^\tau & \Sigma^\infty_+ LY,}$$
where $A(-)$ denotes Waldhausen's $A$-theory (\cite{1126}), and the vertical maps are the Dennis trace. In particular, on $\pi_0$, the image of the composite
\[ \Sigma^\infty_+ Z\to \Sigma^\infty_+ LZ\to \Sigma^\infty_+ LY \]
is included in the image of $A(Y)\to \Sigma^\infty_+ LY$. This is well-known to agree on $\pi_0$ with the image of the Dennis trace map
\begin{equation}\label{dennis_trace}
	K_0(\mathbb Z[\pi_1(Y)])\to \HH_0(\mathbb Z[\pi_1(Y)]).
\end{equation}
To be specific, $A(Y)$ can be identified with the $K$-theory of the ring spectrum $\Sigma^\infty_+ \Omega Y$, and $\Sigma^\infty_+ LY$ is identified with its topological Hochschild homology ($\THH$). The map of ring spectra $\Sigma^\infty_+ \Omega Y \to H(\Z[\pi_1(Y)])$ is 1-connected, and so induces an isomorphism on $\pi_0$ for both $K$-theory and $\THH$. This gives a commuting diagram
\[ \xymatrix{
	\pi_0(A(Y)) \ar[d] \ar@{<-}[r]^-\cong & K_0(\Sigma^\infty_+ \Omega Y) \ar[d] \ar[r]^-\cong & K_0(\mathbb Z[\pi_1(Y)]) \ar[d] \ar[rd]^-{\eqref{dennis_trace}} \\
	\pi_0(\Sigma^\infty_+ LY) \ar@{<-}[r]^-\cong & \THH_0(\Sigma^\infty_+ \Omega Y) \ar[r]^-\cong & \THH_0(\mathbb Z[\pi_1(Y)]) \ar[r]^-\cong & \HH_0(\mathbb Z[\pi_1(Y)])
} \]
where the diagonal map and the vertical maps are all variants of the Dennis trace. See for example \cite[1.6 and 7.1]{EKMM}.

Therefore the loops $\gamma_i$ that appear in \eqref{sum} are those conjugacy classes in $\pi_1(Y)$ that are in the image of \eqref{dennis_trace}, in the sense that some element of $K_0$ has image whose coefficient in that conjugacy class is nonzero. The following theorem of Linnell characterizes which such loops $\gamma_i$ can appear.

\begin{thm}[\cite{linnell}, \cite{berrick_hesselholt}]\label{thm : Linnell}
For any group $G$ and any element $g \in G$, if some element of $K_0$ has image whose coefficient in the conjugacy class of $g$ is nonzero, then there exists an integer $m\geq 1$ such that for any integer $s\geq 1$, $g$ is conjugate to $g^{s^m}$. 
\end{thm}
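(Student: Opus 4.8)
The plan is to deduce the statement from the cyclotomic refinement of the Dennis trace, following \cite{berrick_hesselholt} (with \cite{linnell} giving the original, purely algebraic, route). Recall from the discussion preceding \eqref{dennis_trace} that $\HH_0(\Z[G])$ is the free abelian group on the set of conjugacy classes of $G$, and that the Hattori--Stallings trace of a class $\xi \in K_0(\Z[G])$ is recorded by its coefficients $r_\xi([g])$ in this basis. The extra input I would use is that the Dennis trace \eqref{dennis_trace} factors through the cyclotomic trace $K(\Z[G]) \to TC(\Z[G])$, so that the image we must understand lands not merely in $\THH_0(\Z[G]) \cong \HH_0(\Z[G])$ but in the image of $\pi_0 TC(\Z[G]) \to \THH_0(\Z[G])$. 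It is this factorization through $TC$ --- equivalently, compatibility with the cyclotomic Frobenius --- that will produce the arithmetic constraint on $g$.

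First I would recall the decomposition of $\THH(\Z[G])$ indexed by the components of the free loop space $L(BG) = \coprod_{[g]} B\,Z_G(g)$, i.e.\ by conjugacy classes; on $\pi_0$ this recovers the conjugacy-class basis of $\HH_0$ above. Second, and this is the crux, I would identify the effect of the cyclotomic Frobenius $\varphi_p \colon \THH(\Z[G]) \to \THH(\Z[G])^{tC_p}$ on this decomposition. The geometric input is that the identification of the $C_p$-Tate construction with $\THH$ is governed by the $p$-th power map on the circle, whose effect on $L(BG)$ is $[g] \mapsto [g^p]$ on components; tracking this shows that, at the level of conjugacy classes, the equalizer condition defining $TC$ couples the coefficient at $[g]$ to the coefficients at the classes $[h]$ with $h^p$ conjugate to $g$, i.e.\ at the $p$-th roots of $g$. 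Consequently, if $r_\xi([g]) \neq 0$ then for every prime $p$ there is a class $[h]$ with $h^p$ conjugate to $g$ and $r_\xi([h]) \neq 0$; in other words the finite support $\Sigma$ of $\xi$ admits, for each $p$, a ``$p$-th root'' of $[g]$ lying again in $\Sigma$.

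Finally I would run a pigeonhole argument to produce the uniform exponent $m$. Fix $g$ with $r_\xi([g]) \neq 0$ and a prime $p$. Iterating the previous step yields classes $[g_0] = [g], [g_1], [g_2], \dots$ in $\Sigma$ with $g_{k}^{p}$ conjugate to $g_{k-1}$, hence $g_k^{p^k}$ conjugate to $g$. Since $|\Sigma| = N$ is finite, two of $[g_0], \dots, [g_N]$ coincide, say $[g_a] = [g_b]$ with $a < b$; writing $w = g_a^{p^a}$, which is conjugate to $g$, one has $g_a^{p^b} = w^{p^{b-a}}$ and both $g_a^{p^a}$ and $g_a^{p^b}$ are conjugate to $g$, so $w \sim w^{p^{b-a}}$ and hence $g \sim g^{p^{b-a}}$ with $1 \le b-a \le N$. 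Thus for every prime $p$ the set $S = \{ s \ge 1 : g \sim g^s \}$ contains some $p^{k_p}$ with $k_p \le N$. Now $S$ is a multiplicative submonoid of $(\Z_{\ge 1}, \cdot)$: if $g = c g^a c^{-1}$ and $g = d g^b d^{-1}$, then $g^b = c g^{ab} c^{-1}$, whence $g = d g^b d^{-1} = (dc)\,g^{ab}\,(dc)^{-1}$, so $ab \in S$. Since $k_p \mid N!$ for all $p$, we get $p^{N!} = (p^{k_p})^{N!/k_p} \in S$ for every prime $p$, and therefore $s^{N!} \in S$ for every $s$ by multiplicativity. Taking $m = N!$ then gives the desired conclusion.

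I expect the genuine difficulty to lie in the second step: making precise how the cyclotomic Frobenius, and in particular the Tate construction $\THH(\Z[G])^{tC_p}$, acts on the conjugacy-class decomposition together with its centralizer twists, and thereby justifying rigorously that a nonzero coefficient at $[g]$ forces a $p$-th root of $g$ into the support $\Sigma$. This is exactly the cyclotomic input supplied by \cite{berrick_hesselholt}, and it is of a different nature from the spectral-sequence and $K_0$ computations used elsewhere in this paper; the concluding pigeonhole and monoid argument, by contrast, is elementary once that input is in hand.
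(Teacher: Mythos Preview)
The paper does not prove this theorem; it is quoted from the literature with citations to \cite{linnell} and \cite{berrick_hesselholt} and used as a black box in the second proof of \autoref{intro_functorial}. So there is no ``paper's own proof'' to compare against. Your sketch follows the Berrick--Hesselholt route (factoring the Dennis trace through the cyclotomic trace and reading off constraints from the cyclotomic Frobenius on $\THH(\Z[G])$), which is precisely one of the two cited sources; Linnell's original argument is purely algebraic and does not pass through $TC$.

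Your elementary endgame is correct and cleanly written: the support of the Hattori--Stallings trace of a class in $K_0$ is finite (it is the trace of an idempotent matrix with entries in $\Z[G]$), the pigeonhole step producing $g \sim g^{p^{k_p}}$ with $1 \le k_p \le N$ is valid, the set $S = \{ s \ge 1 : g \sim g^s \}$ is indeed a multiplicative submonoid, and the choice $m = N!$ closes the argument. As you yourself flag, the substantive input is the second step --- extracting from the $TC$-factorization that a nonzero coefficient at $[g]$ forces a $p$-th root of $[g]$ back into the support --- and that is exactly what \cite{berrick_hesselholt} supplies. Since the present paper treats the theorem as a citation, there is nothing further to compare.
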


In particular, if $g$ has this property and has finite order $n$, it must be trivial, since $g$ is conjugate to $g^{n^m} = 1$, and therefore $g = 1$.

\begin{cor}
If $\gamma_i$ appears in the sum \eqref{sum}, then $\gamma_i$ acts trivially on $H_*(X_y;k)$ for any finite field $k$. 
\end{cor}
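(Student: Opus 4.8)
The plan is to reduce to the finite-order case already disposed of in the remark following \autoref{thm : Linnell}. The key point is that although $\gamma_i$ need not have finite order in $\pi_1(Y)$, its image under the monodromy representation on mod-$k$ homology lands in a \emph{finite} group, and the Linnell property of \autoref{thm : Linnell} is inherited by that image.

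First I would set up the monodromy representation. Since $p\colon X \to Y$ is a fibration, each based loop in $Y$ determines a self-homotopy-equivalence of the fiber $X_y$, well-defined up to homotopy, and this assignment is a group homomorphism $\pi_1(Y,y) \to \pi_0(\mathrm{hAut}(X_y))$. Applying $H_*(-;k)$ rigidifies this into a genuine group homomorphism
\[ \rho\colon \pi_1(Y,y) \to \mathrm{GL}(H_*(X_y;k)). \]
Because $X_y$ is finitely dominated and $k$ is a field, $H_*(X_y;k)$ is finite-dimensional and concentrated in finitely many degrees; because $k$ is moreover finite, the target $\mathrm{GL}(H_*(X_y;k)) = \prod_i \mathrm{GL}(H_i(X_y;k))$ is a finite group. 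This finiteness is the crux of the argument.

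Next I would transport the Linnell property through $\rho$. Writing $g = \gamma_i$, the fact that $\gamma_i$ appears in \eqref{sum} means $g$ satisfies the hypothesis of \autoref{thm : Linnell}, so there is an integer $m \geq 1$ with $g$ conjugate to $g^{s^m}$ for every $s \geq 1$. Since $\rho$ is a homomorphism carrying conjugate elements to conjugate elements, the image $M := \rho(g)$ satisfies: $M$ is conjugate to $M^{s^m}$ in $\mathrm{GL}(H_*(X_y;k))$ for every $s \geq 1$. But $M$ now lies in a finite group, hence has finite order, say $n$. Taking $s = n$ and using $m \geq 1$ gives $M^{n^m} = (M^n)^{n^{m-1}} = \mathrm{id}$, so $M$ is conjugate to the identity and therefore $M = \mathrm{id}$. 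This is exactly the assertion that $\gamma_i$ acts trivially on $H_*(X_y;k)$.

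I do not expect a serious obstacle here, as this is a short reduction rather than a substantive new argument; the work is entirely carried by \autoref{thm : Linnell} and the finiteness of $\mathrm{GL}$ over a finite field. The only point that genuinely requires care is confirming that the monodromy action on homology is an honest linear representation—so that group-theoretic conjugacy and the powers $g^{s^m}$ are literally preserved—rather than merely a homotopy-coherent fiber transport; passing to $H_*(-;k)$ is precisely what makes this rigidification legitimate and standard.
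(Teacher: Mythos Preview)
Your proposal is correct and follows essentially the same argument as the paper: both observe that $\mathrm{GL}(H_*(X_y;k))$ is finite (since $X_y$ is finitely dominated and $k$ is finite), push the Linnell conjugacy property $g \sim g^{s^m}$ through the monodromy homomorphism, and then invoke the finite-order observation following \autoref{thm : Linnell} to conclude the image is trivial. Your write-up is more explicit about the monodromy representation being an honest group homomorphism, but the mathematical content is identical.
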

\begin{proof}
Since $X_y$ is finitely dominated, $H_*(X_y;k)$ is a finite dimensional $k$-vector space. So if $k$ is a finite field, $GL(H_*(X_y;k))$ is finite. If $\gamma_i$ appears in the sum, then by \autoref{thm : Linnell} there is an $m$ such that $\gamma_i$ is conjugate to $\gamma_i^{s^m}$ for any $s \geq 1$. The same is true of the image of $\gamma_i$ in the finite group $GL(H_*(X_y;k))$. Therefore this image is trivial.
\end{proof}

Therefore each $\gamma_i$ in \eqref{sum} acts trivially on $H_*(X_y;\F_p)$ for each prime $p$. It follows that the Lefschetz number $L(\gamma_i)$ mod $p$, which can be computed from this action on $H_*(X_y;\F_p)$, agrees with $L(\id) = \chi(X_y)$. But this is true for every prime, so $L(\gamma_i) = \chi(X_y)$ integrally.

In summary, each $\gamma_i$ has the same Lefschetz number as the identity map of $X_y$. Therefore the image of $1 \in \Z$ under the two maps $(q\circ p)^!$ and $p^!\circ q^!$ agree,
\[ \sum_i a_i L(\gamma_i) = \sum_i a_i \chi(X_y). \]
This concludes the proof of \autoref{intro_functorial}.

\begin{rmk}
The result \cite[Thm 1.2]{LM} could be used in the place of \autoref{thm : CCRY} from \cite{CCRY} in this proof, if we also know that $\pi_1(Y)$ satisfies Bass' trace conjecture, because then the only loop $\gamma_i$ that can appear in \eqref{sum} is the trivial loop. See also \cite[Rmk 8.10]{LM}. 
\end{rmk}

\bibliographystyle{amsalpha}
\bibliography{references}{}

\providecommand{\bysame}{\leavevmode\hbox to3em{\hrulefill}\thinspace}
\providecommand{\MR}{\relax\ifhmode\unskip\space\fi MR }
\providecommand{\MRhref}[2]{%
  \href{http://www.ams.org/mathscinet-getitem?mr=#1}{#2}
}
\providecommand{\href}[2]{#2}
\begin{thebibliography}{EKMM97}

\bibitem[BG76]{becker1976transfer}
J.~C. Becker and D.~H. Gottlieb, \emph{Transfer maps for fibrations and
  duality}, Compositio Math. \textbf{33} (1976), no.~2, 107--133. \MR{0436137}

\bibitem[BH15]{berrick_hesselholt}
A.~J. Berrick and Lars Hesselholt, \emph{Topological {H}ochschild homology and
  the {B}ass trace conjecture}, J. Reine Angew. Math. \textbf{704} (2015),
  169--185. \MR{3365777}

\bibitem[CCRY22]{CCRY}
Shachar Carmeli, Bastiaan Cnossen, Maxime Ramzi, and Lior Yanovski,
  \emph{Characters and transfer maps via categorified traces}, preprint,
  arXiv:2210.17364, 2022.

\bibitem[CJ98]{crabb_james}
Michael Crabb and Ioan James, \emph{Fibrewise homotopy theory}, Springer
  Monographs in Mathematics, Springer-Verlag London, Ltd., London, 1998.
  \MR{1646248}

\bibitem[DP80]{dold_puppe}
Albrecht Dold and Dieter Puppe, \emph{Duality, trace, and transfer},
  Proceedings of the {I}nternational {C}onference on {G}eometric {T}opology
  ({W}arsaw, 1978), PWN, Warsaw, 1980, pp.~81--102. \MR{656721}

\bibitem[EKMM97]{EKMM}
A.~D. Elmendorf, I.~Kriz, M.~A. Mandell, and J.~P. May, \emph{Rings, modules,
  and algebras in stable homotopy theory}, Mathematical Surveys and Monographs,
  vol.~47, American Mathematical Society, Providence, RI, 1997, With an
  appendix by M. Cole. \MR{1417719}

\bibitem[Kle]{MO-multip-Euler}
John~R. Klein, \emph{Multiplicativity of the {E}uler characteristic for
  fibrations}, MathOverflow, https:/\!\!/mathoverflow.net/q/416927.

\bibitem[KM]{km_corr}
John~R. Klein and Cary Malkiewich, \emph{Corrigendum to ``the transfer is
  functorial''}, to appear.

\bibitem[KM18]{km}
\bysame, \emph{The transfer is functorial}, Adv. Math. \textbf{338} (2018),
  1119--1140. \MR{3861724}

\bibitem[Lal68]{lal}
V.~J. Lal, \emph{Wall obstruction of a fibration}, Invent. Math. \textbf{6}
  (1968), 67--77. \MR{231380}

\bibitem[Lin83]{linnell}
P.~A. Linnell, \emph{Decomposition of augmentation ideals and relation
  modules}, Proc. London Math. Soc. (3) \textbf{47} (1983), no.~1, 83--127.
  \MR{698929}

\bibitem[LM19]{LM}
John~A. Lind and Cary Malkiewich, \emph{The transfer map of free loop spaces},
  Trans. Amer. Math. Soc. \textbf{371} (2019), no.~4, 2503--2552. \MR{3896088}

\bibitem[Mal19]{malkiewich_parametrized}
Cary Malkiewich, \emph{Parametrized spectra, a low-tech approach}, preprint,
  arXiv:1906.04773, 2019.

\bibitem[May01]{may_additivity}
J.~P. May, \emph{The additivity of traces in triangulated categories}, Adv.
  Math. \textbf{163} (2001), no.~1, 34--73. \MR{1867203}

\bibitem[MS06]{ms}
J.~P. May and J.~Sigurdsson, \emph{Parametrized homotopy theory}, Mathematical
  Surveys and Monographs, vol. 132, American Mathematical Society, Providence,
  RI, 2006. \MR{2271789}

\bibitem[PS16]{ponto_shulman_additivity}
Kate Ponto and Michael Shulman, \emph{The linearity of traces in monoidal
  categories and bicategories}, Theory Appl. Categ. \textbf{31} (2016), Paper
  No. 23, 594--689. \MR{3518981}

\bibitem[Spa66]{spanier}
Edwin~H. Spanier, \emph{Algebraic topology}, McGraw-Hill Book Co., New
  York-Toronto, Ont.-London, 1966. \MR{0210112}

\bibitem[Swa70]{swan_finite}
Richard~G. Swan, \emph{{$K$}-theory of finite groups and orders}, Lecture Notes
  in Mathematics, Vol. 149, Springer-Verlag, Berlin-New York, 1970.
  \MR{0308195}

\bibitem[Wal85]{1126}
F.~Waldhausen, \emph{Algebraic {$K$}-theory of spaces}, Algebraic and geometric
  topology, Springer, 1985, pp.~318--419.

\bibitem[Wei13]{weibel_kbook}
Charles~A. Weibel, \emph{The {$K$}-book}, Graduate Studies in Mathematics, vol.
  145, American Mathematical Society, Providence, RI, 2013, An introduction to
  algebraic $K$-theory. \MR{3076731}

\end{thebibliography}

\end{document}